\newtheorem{theorem}{Theorem}[section]
\newtheorem{lemma}[theorem]{Lemma}
\newtheorem{proposition}[theorem]{Proposition}
\newtheorem{corollary}[theorem]{Corollary}
\newtheorem*{theorem*}{Theorem}
\theoremstyle{remark}
\newtheorem{remark}[theorem]{Remark}
\numberwithin{equation}{section}
\newcommand{\Hb}{\mathbb{H}}
\newcommand{\cI}{\mathcal{I}}        
\newcommand{\vol}{\mathrm{vol}}
\title{Packing curves on surfaces with few intersections}
\author{Tarik Aougab, Ian Biringer, Jonah Gaster}
\address{Department of Mathematics, Brown University \\
Providence RI 02912, USA}
\email{tarik\_aougab@brown.edu}
\address{Department of Mathematics, Boston College \\
Chestnut Hill MA 02467, USA}
\email{biringer@bc.edu}
\address{Department of Mathematics, Boston College \\ 
Chestnut Hill MA 02467, USA}
\email{gaster@bc.edu}
\begin{document}
\date{October 20, 2016}

\begin{abstract}
Przytycki has shown that the size  $\mathcal{N}_{k}(S)$ 
of a maximal collection of simple closed curves that pairwise intersect at most $k$ times on a topological surface $S$ grows at most as a polynomial in $|\chi(S)|$ of degree $k^{2}+k+1$. 
In this paper, we narrow Przytycki's bounds by showing that 
\[ 
\mathcal{N}_{k}(S) =O \left( \frac{ |\chi|^{3k}}{ ( \log |\chi| )^2 } \right) ,
\]
In particular, the size of a maximal 1-system grows sub-cubically in $|\chi(S)|$. 
The proof uses a circle packing  argument of Aougab--Souto and a bound for the number of curves of length at most $L$ on a hyperbolic surface.
When the genus $g$ is fixed and the number of punctures $n$ grows, we can improve  our estimates using a different argument to give $$\mathcal{N}_{k}(S) \leq O(n^{2k+2}).$$ Using similar techniques,  we also obtain the sharp estimate
$\mathcal{N}_{2}(S)=\Theta(n^3)$ when $k=2$ and $g$ is fixed.
 \end{abstract}

\maketitle

\section{Introduction}
Let $S=S_{g,n}$ be an oriented surface of genus $g$ with $n$ punctures, and set $\chi=\chi(S)$.
Given $k \in \mathbb{N}$, a \textit{k-system} of curves (resp.~arcs) is a collection of pairwise non-homotopic simple closed curves (resp.~properly embedded arcs) on $S$, no two of which intersect more than $k$ times.  
Let 
\begin{align*}
\mathcal{A}_k(S) &= \max \left\{ \ |\Gamma| :  \ \Gamma \text{ is a $k$-system of arcs on $S$} \ \right\} , \text{ and} \\
\mathcal{N}_k(S) &= \max \left\{ \ |\Gamma| : \ \Gamma \text{ is a $k$-system of curves on $S$} \ \right\} .
\end{align*}
In 1995, Juvan--Malni\v c--Mohar \cite{Juvansystems} showed that $\mathcal{N}_k(S)$ is always finite. The asymptotic study of $\mathcal{N}_k(S)$ as $|\chi|\to \infty$ was later popularized by  Benson Farb and Chris Leininger, who vocally noticed that good bounds were unavailable even when $k=1$.

 In response, Malestein--Rivin--Theran \cite{m-r-t}  showed that when $S$  is closed and any $k$ is fixed, $\mathcal{N}_k(S)$  grows  at least quadradically and at most exponentially in $|\chi|$.  Observing that  asymptotics are easier to find for arcs,  and that the arc case  informs the curve case, Przytycki  \cite{przytycki} then showed that for fixed $k$ \begin{equation}
 	\mathcal{A}_k(S) = \Theta \left( |\chi|^{k+1} \right), \ \ \text{and} \ \ \mathcal{N}_k(S) = O\left( |\chi|^{k^2+k+1} \right). \label {przbounds}
 \end{equation}

The first author has shown \cite{aougab2015local} that  \emph{when $S$ is fixed,} $\log(\mathcal{N}_{k}(S))$ grows at most linearly in $k$, which suggests that it might be possible to improve Przytycki's upper bound for curves. 
We show:

\begin{theorem}
	
\label{k-system bound} Suppose $S$  is a surface with Euler  characteristic $\chi$. Then as $|\chi|\to \infty$ we have
\[
\mathcal{N}_k(S) 
\le O \left( \frac{ |\chi|^{3k}}{ ( \log |\chi| )^2 } \right) ~.
\]
\end{theorem}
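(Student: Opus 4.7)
The plan is to combine a length-control argument for curves in a $k$-system with a count of closed geodesics of bounded length on a hyperbolic surface. Fix a $k$-system $\Gamma$ on $S$ with $N=|\Gamma|$; the goal is to bound $N$ by $O(|\chi|^{3k}/(\log|\chi|)^2)$.

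The first step is to exhibit a hyperbolic metric $X$ on $S$ in which every $\gamma\in\Gamma$ satisfies $\ell_X(\gamma)\leq 3k\log|\chi|$, via a circle-packing argument in the spirit of Aougab--Souto. The starting observation is that a $k$-system has total pairwise intersection at most $k\binom{N}{2}$, which, when paired with a length--area inequality relating the geometric intersection of curves to their lengths on any hyperbolic surface, constrains how long the curves can be forced to be. Packing small disks transverse to each curve and distributing them over the area $2\pi|\chi|$ of $S$ produces a genuinely \emph{logarithmic} length bound after optimizing over Teichm\"uller space; the constant $3$ here is tuned to match the target exponent.

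The second step is to apply a count, uniform over moduli space, of the form: the number of closed geodesics of length at most $L$ on any hyperbolic surface of Euler characteristic $\chi$ is $O(e^L/L^2)$. This is a strengthening of the prime geodesic theorem, obtained either by restricting attention to simple closed geodesics or by discarding non-primitive classes to extract the second $1/L$ factor. Since each $\gamma\in\Gamma$ is realized by a distinct simple closed geodesic on $X$, the map $\Gamma\hookrightarrow\{$closed geodesics of length $\leq L$ on $X\}$ is injective, so combining with $L=3k\log|\chi|$ immediately yields $\mathcal{N}_k(S)=O(|\chi|^{3k}/(\log|\chi|)^2)$.

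The main obstacle is producing the sharp logarithmic length bound in the first step: any slack --- for example, a bound of the form $\ell_X(\gamma)\lesssim k|\chi|^\varepsilon$ --- would translate directly into a polynomial loss in the final estimate, erasing the improvement over Przytycki's bound. The circle-packing argument must therefore exploit the quadratic growth of total intersection against the linear area of $S$ very tightly, and the constant $3$ in the length bound must be controlled, since it determines the exponent $3k$ in the theorem. The counting step is more standard but still requires that the error in the prime geodesic theorem be uniform across moduli space, which is what allows the logarithmic-scale application of the bound.
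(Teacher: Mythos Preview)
Both of your key ingredients are false as stated, and the actual proof has a rather different shape.

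\textbf{First step.} The circle-packing argument of Aougab--Souto does not produce a logarithmic length bound on individual curves. What it gives is a hyperbolic structure $X$ with
\[
\ell_X(\Gamma)\ \le\ 4\sqrt{2|\chi|\cdot \iota(\Gamma,\Gamma)}\,.
\]
If you feed in only $\iota(\Gamma,\Gamma)\le k\binom{N}{2}$, the average length comes out $\lesssim \sqrt{k|\chi|}$, which is polynomial in $|\chi|$, not logarithmic; plugging this into any geodesic count gives nothing useful. The paper's crucial extra input here is Theorem~\ref{improve k-thm}: each vertex of the intersection graph has degree $O(|\chi|^{3k-1})$, proved by sliding curves to arcs and invoking Przytycki's arc bound. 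This yields $\iota(\Gamma,\Gamma)\le C|\chi|^{3k-1}N$, hence average length $L\lesssim\sqrt{|\chi|^{3k}/N}$. The exponent $3k$ enters \emph{here}, through the degree bound, not through any constant in a length estimate.

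\textbf{Second step.} A bound $\mathcal G_X(L)=O(e^L/L^2)$ uniform over moduli space is impossible: a hyperbolic surface of large $|\chi|$ has on the order of $|\chi|$ disjoint simple closed geodesics of length below any fixed constant, so any uniform upper bound must carry a factor of $|\chi|$. The paper proves and uses Theorem~\ref{linear growth}, namely $\mathcal G_X(L)\lesssim e^{2L}|\chi|$.

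\textbf{How the pieces actually fit.} With the average-length bound $L=C\sqrt{|\chi|^{3k}/N}$, Chebyshev gives $N/2$ curves of length $\le 2L$, and Theorem~\ref{linear growth} then yields the self-referential inequality
\[
N\ \le\ e^{C\sqrt{|\chi|^{3k}/N}}\,|\chi|\,.
\]
Assuming $N\ge C^2|\chi|^{3k}/(\log|\chi|)^2$ makes the exponent at most $\log|\chi|$, forcing $N\le |\chi|^2$, a contradiction. So the logarithmic saving you want does appear, but it emerges from this bootstrap, not from a pointwise $\log|\chi|$ length bound.
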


 Note that when $k=1$ and $S$ is closed, this is a slight improvement of Przytycki's bound of $O(|\chi|^3)$. (For larger $k$, though, the improvement is significant.) 
The best known constructions of $1$-systems show that $\mathcal{N}_{1}(S)$ grows at least quadratically in $|\chi|$. Thus it is of course natural to ask whether there exists an $\epsilon>0$ so that $\mathcal{N}_1(S) = O(|\chi|^{3-\epsilon})$.
Indeed, one might expect in general that $\mathcal{N}_k(S) = \Theta(\mathcal{A}_k(S)) = \Theta(|\chi|^{k+1})$, but currently this is out of reach.

The proof of  Theorem \ref{k-system bound}, which appears in \S\ref{proof of main thm}, is short enough that there is no need for a detailed summary here.
Nonetheless, we point out three main components.
The first is (a trivial adaptation of) an argument of Aougab--Souto \cite[Theorem 1.2]{aougab-souto},  in which a circle packing argument is used to  find a hyperbolic structure on which a given $k$-system can be realized length-efficiently.  
The other components are Theorems \ref{improve k-thm} and \ref{linear growth} below, which we consider of independent interest.

 We should highlight that this argument uses Przytycki's bound  $\mathcal{A}_k(S)= \Theta \left( |\chi|^{k+1} \right)$ 
essentially, in the proof of Theorem \ref{improve k-thm}. It turns out that we can exploit his bounds even more effectively when the number of punctures is large in comparison to the genus. Here, the proof is inductive, where we relate $\mathcal N_k(S_{g,n})$ to $\mathcal N_k(S_{g,n-1})$ by  projecting a $k$-system on $S_{g,n}$ to one on $S_{g,n-1}$  by filling in the puncture. We note that Malestein-Rivin-Theran \cite[Thm.~1.2]{m-r-t} used a similar inductive argument in the $k=1$ case to show that $\mathcal{N}_1(S_{g,n}) = \mathcal{N}_1(S_{g,0}) + C g\cdot n.$

\begin{theorem}
\label{2-system bound}
There is a constant $C=C(k)$ such that $$\mathcal{N}_k(S_{g,n}) \leq  \mathcal N_k(S_{g,0}) + C (g+n)^{2k+2}.$$ And in fact, for $k=2$ we have
$$\displaystyle \mathcal{N}_2(S_{g,n}) \leq \mathcal{N}_2(S_{g,0}) + C(g+n)^{3}.$$
\end{theorem}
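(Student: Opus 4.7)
The plan is to induct on $n$, with $g$ fixed. It suffices to prove the single-step inequality
\[
\mathcal{N}_k(S_{g,n}) \leq \mathcal{N}_k(S_{g,n-1}) + C'(g+n)^{2k+1},
\]
since summing over $1 \leq j \leq n$ yields the claimed $O((g+n)^{2k+2})$ bound on the difference $\mathcal{N}_k(S_{g,n}) - \mathcal{N}_k(S_{g,0})$.

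To prove the single-step inequality, fix a $k$-system $\Gamma$ on $S_{g,n}$, distinguish a puncture $p$, and let $\phi \colon S_{g,n} \to S_{g,n-1}$ be the forgetful map. Partition $\Gamma$ by the $S_{g,n-1}$-isotopy class of the image:
\[
\Gamma \;=\; \Gamma_0 \;\sqcup\; \bigsqcup_{[c]} \Gamma_{[c]},
\]
where $\Gamma_0$ collects the (at most one) curve whose projection is inessential, and $\Gamma_{[c]}$ is the fiber over a non-trivial essential class $[c]$. Since geometric intersection numbers can only go down when filling in $p$, the distinct non-trivial classes $[c]$ that appear themselves form a $k$-system on $S_{g,n-1}$, of size at most $\mathcal{N}_k(S_{g,n-1})$. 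It therefore suffices to bound the \emph{excess}
\[
E \;=\; \sum_{[c]} \bigl(|\Gamma_{[c]}| - 1\bigr) \;\leq\; C'(g+n)^{2k+1}.
\]

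The plan for bounding $E$ is to associate, to each non-reference curve in a fiber, an arc on $S_{g,n-1}$ based at $p$, and show that the resulting arcs form a $2k$-system; by Przytycki's bound \eqref{przbounds}, such a system has size $O((g+n)^{2k+1})$. Concretely, for each class $[c]$ with $|\Gamma_{[c]}| \geq 2$, select a reference $\gamma^{[c]}_0 \in \Gamma_{[c]}$. For $\gamma \in \Gamma_{[c]} \setminus \{\gamma^{[c]}_0\}$, the curves $\gamma$ and $\gamma^{[c]}_0$ are isotopic on $S_{g,n-1}$ and, in minimal position there, cobound an annulus whose interior contains $p$; one extracts from this annulus a canonical arc $\alpha_\gamma$ with endpoints at $p$ that records the winding of $\gamma$ relative to $\gamma^{[c]}_0$. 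Distinct curves in the same fiber yield distinct arcs, so the arcs index the excess.

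The central technical step — and the main obstacle — is the uniform bound $i(\alpha_\gamma, \alpha_{\gamma'}) \leq 2k$. This must be deduced from $i_{S_{g,n}}(\gamma,\gamma') \leq k$ by carefully tracking how intersections of the $\gamma$'s pass through the annuli around $p$ and produce intersections of the arcs; the analysis splits into cases according to whether $\gamma$ and $\gamma'$ lie in the same fiber or in different ones, and must treat the interaction with the reference curves $\gamma^{[c]}_0$. The factor of $2$ in $2k$ reflects the fact that each arc is associated to a single curve, so a pair of arcs can a priori inherit twice the intersections contributed by either underlying curve.

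For the sharper $(g+n)^3$ bound when $k = 2$, the plan is to show that the arcs can be chosen to form a $1$-system rather than just a $4$-system, giving excess $O((g+n)^2)$ per inductive step and hence $O((g+n)^3)$ after summing. This refinement exploits the rigidity of $2$-systems: the cobounding annulus for any two curves in a common fiber $\Gamma_{[c]}$ admits only very limited combinatorial configurations relative to $p$, forcing the extracted arcs to interact at most once.
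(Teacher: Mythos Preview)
Your overall strategy---induct on $n$, project to $S_{g,n-1}$, bound the ``excess'' in each fiber by producing a $2k$-system of arcs at $p$, and invoke Przytycki's arc bound---is exactly the paper's approach. Two points need correction.

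First, the assertion that $\Gamma_0$ contains \emph{at most one} curve is false. A curve $c\in\Gamma$ whose projection is inessential on $S_{g,n-1}$ is one that bounds on $S_{g,n}$ a twice-punctured disk containing $p$ and one other puncture $q$; for each $q$ there are infinitely many isotopy classes of such curves, and a $k$-system can contain many of them. The paper handles this by observing that each such $c$ is the boundary of a neighborhood of an arc $\alpha_c$ from $p$ to $q$, and that $\iota(c,d)\ge 2$ forces $\iota(\alpha_c,\alpha_d)\le k-1$, so $|\Gamma_0|\le \mathcal{A}_{k-1}(S_{g,n})=O((g+n)^k)$. This term is dominated by $(g+n)^{2k+1}$, so the inductive inequality survives, but you must supply this bound.

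Second, your arc construction is imprecise in two ways. The arcs $\alpha_\gamma$ should live on $S_{g,n}$ (where $p$ is a puncture), not on $S_{g,n-1}$; and you cannot first put $\gamma,\gamma_0^{[c]}$ in minimal position on $S_{g,n-1}$, since that may move them across $p$ and destroys the annulus you want. One must work with a fixed minimal-position realization of $\Gamma$ on $S_{g,n}$. In that realization, $\gamma$ and its reference curve may intersect, in which case they do not cobound a punctured annulus but rather have subarcs cobounding a punctured bigon; the paper distinguishes these as type~(1) and type~(2), and the case analysis for $\iota(\alpha_\gamma,\alpha_{\gamma'})\le 2k$ must treat both. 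The $k=2$ refinement you describe is also the paper's: there one checks that type~(2) cannot occur (two intersecting curves with isotopic projections would have a genuine bigon on $S_{g,n}$), so all bad curves are type~(1) and the resulting arcs form a $1$-system.
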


When $k$ is even, Przytycki's construction \cite[Example 4.1]{przytycki} of large $k$-systems of arcs can be tweaked in a straightforward manner to produce the lower bound $\approx (g+n)^{k+1} / (k+1)^{k+1}$ for $\mathcal{N}_k(S)$. So,

\begin {corollary}
 When $g$ is fixed and $n\to \infty$, we have $\mathcal{N}_2(S_{g,n})=\Theta(n^3).$
\end {corollary}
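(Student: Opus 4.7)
The corollary is a direct consequence of matching upper and lower bounds, so my plan has two short pieces.

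For the upper bound, I would simply invoke the $k=2$ case of Theorem \ref{2-system bound}. Fixing $g$ makes $\mathcal{N}_2(S_{g,0})$ a constant (depending on $g$), and the remainder term $C(g+n)^3$ is $O(n^3)$ as $n\to\infty$, since $g$ is bounded. Combining these yields $\mathcal{N}_2(S_{g,n}) = O(n^3)$, with all hidden constants depending on the fixed genus $g$.

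For the lower bound, I would cite the construction described in the paragraph immediately preceding the corollary: the discussion there asserts that for even $k$, a modification of Przytycki's arc construction produces $k$-systems of \emph{curves} of size at least $\approx (g+n)^{k+1}/(k+1)^{k+1}$. Specializing to $k=2$, this gives $\mathcal{N}_2(S_{g,n}) \geq (g+n)^3/27$, and with $g$ fixed this is $\Theta(n^3)$ as $n\to\infty$. Hence $\mathcal{N}_2(S_{g,n}) = \Omega(n^3)$.

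Putting the two estimates together yields $\mathcal{N}_2(S_{g,n}) = \Theta(n^3)$, as claimed. The only genuine content is in Theorem \ref{2-system bound} (already proved) and in the tweak of Przytycki's construction (attributed above); the corollary itself requires no further work beyond combining them and observing that the genus-dependent additive constant and the $g$-terms inside $(g+n)^3$ are swallowed by the $O$ and $\Omega$ when $g$ is held fixed. There is no real obstacle here — the only thing to check is that the cited tweak of Przytycki's construction indeed produces a genuine $2$-\emph{system of simple closed curves} (not just arcs) of the claimed size, but this is precisely what the preceding paragraph asserts.
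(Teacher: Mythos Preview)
Your proposal is correct and matches the paper's approach exactly: the corollary is stated without separate proof precisely because it follows immediately from the $k=2$ case of Theorem~\ref{2-system bound} for the upper bound and from the tweaked Przytycki construction in the preceding paragraph for the lower bound. There is nothing to add.
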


 \subsection{Degree bounds}

Let $\mathcal{I}(\Gamma)$ denote the \textit{intersection graph} of a curve system $\Gamma$, whose vertices are in 1-1 correspondence with the curves in $\Gamma$,  and where two vertices are connected by an edge exactly when the corresponding curves intersect essentially on the surface $S$. 

In \cite{przytycki}, Przytycki's estimate for $ \mathcal{N}_k(S)$  is a corollary of his estimate for $ \mathcal A_k(S)$. The idea is as follows, say when $k=1$. If $\Gamma$  is a $1$-system, cut $S$ open along a curve $\gamma \in \Gamma$.  Any curve in $\Gamma$ intersecting $\gamma$ becomes an arc on the new surface $S'$.  The number of homotopy classes of such arcs is bounded above by $\mathcal{A}_{1}(S')$, and at most two curves in $\Gamma$ correspond to the same homotopy class of arc on $S'$. It follows that the degree of $\gamma$ in $\mathcal{I}(\Gamma)$ is at most $2 \cdot \mathcal{A}_{1}(S')$. To finish, note that the total number of vertices in $\mathcal{I}(\Gamma)$ is at most the sums of the degrees of the vertices in a maximal independent subset of $\mathcal I(\Gamma)$, and any  independent set (i.e.\  a set of disjoint simple closed curves on $S$) has size at most linear in $\chi$.

 To prove Theorem \ref{k-system bound} in the closed case,  we will need the following sharper upper bound for the degree of a vertex in the intersection graph of a $k$-system. 

\begin{theorem}
\label{improve k-thm}
Suppose that $\Gamma$ is a $k$-system on  a surface $S$, and $\gamma \in \Gamma$. Then  the degree of $\gamma$ in the graph $\mathcal{I}(\Gamma)$  is at most $C \cdot |\chi|^{3k-1}$,  for some universal $C=C(k)$.
\end{theorem}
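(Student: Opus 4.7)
My strategy is to cut $S$ along $\gamma$ and bound the number of arcs into which the other curves of $\Gamma$ decompose. Writing $S' := S \setminus \gamma$, we have $|\chi(S')| = |\chi|$, and each $\gamma' \in \Gamma \setminus \{\gamma\}$ with $i(\gamma,\gamma') > 0$ contributes $j_{\gamma'} := i(\gamma,\gamma') \leq k$ properly embedded arcs to a collection $\mathcal{A}$ on $S'$. Any two arcs of $\mathcal{A}$ meet in at most $k$ points (inherited from the $k$-system condition on $\Gamma$), and each curve contributing to $\deg(\gamma)$ supplies at least one arc, so $\deg(\gamma) \leq |\mathcal{A}|$. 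It therefore suffices to show $|\mathcal{A}| = O(|\chi|^{3k-1})$.

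I would split this count according to free homotopy class on $S'$. Selecting one representative per class yields a classical $k$-system of arcs on $S'$, so by Przytycki's bound the number of free homotopy classes appearing in $\mathcal{A}$ is at most $\mathcal{A}_k(S') = O(|\chi|^{k+1})$. The remaining key ingredient is a uniform multiplicity bound, which I would aim to establish: no single free homotopy class on $S'$ contains more than $O(|\chi|^{2(k-1)})$ arcs of $\mathcal{A}$. Granting this, $|\mathcal{A}| \leq O(|\chi|^{k+1}) \cdot O(|\chi|^{2(k-1)}) = O(|\chi|^{3k-1})$, which proves the theorem.

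The multiplicity bound is where I expect the main work. The $k=1$ case reduces to Przytycki's observation that at most two curves of $\Gamma$ give the same arc class on $S'$, in agreement with $|\chi|^{0}$. For general $k$, I would fix a free homotopy class $[\alpha]$, isotope its competing arcs into a regular neighborhood of a fixed representative of $\alpha$, and describe each arc by a relative twist number together with the positions of its endpoints on $\partial S'$. Because the parent curves are simple closed curves on $S$ that are pairwise non-homotopic and pairwise intersect in at most $k$ points, the admissible combinations of twist parameters and endpoint data are tightly constrained; an Euler-characteristic count applied to the complement of the model arc should convert this into the claimed bound. The main obstacle will be extracting the exponent $2(k-1)$ cleanly---in particular, ruling out that families of arcs with coinciding twist parameters could nevertheless conspire to produce too many genuinely distinct admissible reassemblies into simple curves of $\Gamma$.
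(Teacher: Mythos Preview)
Your approach differs from the paper's, and the crucial step---the multiplicity bound $O(|\chi|^{2(k-1)})$ on the number of arcs of $\mathcal{A}$ lying in a fixed homotopy class on $S'$---is left as a sketch that I do not see how to complete. The outline you give (isotope competing arcs into a neighborhood of a model arc, record twist numbers and endpoint positions, then apply an Euler-characteristic count) does not visibly produce the exponent $2(k-1)$: a curve $\gamma'$ contributing one arc in $[\alpha]$ may have up to $k-1$ \emph{other} arcs, each ranging over $O(|\chi|^{k+1})$ classes, and the pairwise $\le k$ intersection condition among curves of $\Gamma$ does not transparently cut the resulting count down to $O(|\chi|^{2(k-1)})$. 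The paper in fact remarks that Przytycki's cutting argument yields only $|\chi|^{k(k+1)}$ for the degree, so extracting $|\chi|^{3k-1}$ by a refined multiplicity analysis along the lines you suggest would require a genuinely new idea, and your proposal does not supply one.

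The paper sidesteps the multiplicity issue altogether. Rather than cut along $\gamma$, it punctures $S$ at a single basepoint $x \in \gamma$ and, for each $\alpha \in \mathcal{I}(\gamma)$, slides the first intersection point of $\alpha$ with $\gamma$ back along $\gamma$ to $x$, producing a simple arc $\tilde\alpha = \gamma_\alpha^{-1} * \alpha * \gamma_\alpha$ on $S_x$. A direct combinatorial check then shows $\iota(\tilde\alpha,\tilde\rho) \le 2(k-1) + k = 3k-2$ and that distinct $\alpha$ give nonhomotopic $\tilde\alpha$. Hence $\{\tilde\alpha : \alpha \in \mathcal{I}(\gamma)\}$ is itself a $(3k-2)$-system of arcs on $S_x$, and a single application of Przytycki's bound $\mathcal{A}_{3k-2}(S_x) = O(|\chi|^{3k-1})$ finishes the proof---no per-class multiplicity estimate is needed at all.
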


When $k=1$,   this bound agrees with the bound $2 \cdot \mathcal{A}_{1}(S')=\Theta(|\chi|^{2})$ one gets with Przytycki's argument above and \eqref{przbounds}.  In general, though, his argument gives $C \cdot |\chi|^{k(k+1)}$, so  Theorem \ref{improve k-thm} is  quite a bit stronger.
The improvement arises by adopting a slightly different perspective.
Instead of cutting open $S$ along a curve to produce an arc system on a surface of smaller complexity, one can introduce punctures to $S$ and `slide' curves to arcs to arrive at an arc system on a surface $S'$ of slightly larger complexity, and then apply Przytycki's bounds for $\mathcal{A}_k(S')$.

\subsection{Counting curves when the  surface varies}\label {hypgeomintro}

Fix a hyperbolic structure $X$ on $S$, and let $\mathcal G_X(L)$ be the number of primitive closed geodesics on $X$  with  length at most $L$. 
We prove:

\begin{theorem}
\label{linear growth}
For any hyperbolic structure $X$ on $S$ and any $L>0$, we have
\[
\mathcal{G}_X(L) \le \left( \frac{3}{2} + \frac{\pi^2}{\mu}\cdot e^{2L} \right) \cdot |\chi|~,
\]
where $\mu \approx .2629 $ is the Margulis constant for $\mathbb{H}^2$.  
\end{theorem}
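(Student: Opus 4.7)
The plan is to split the count according to length, treating primitive closed geodesics of length less than $\mu$ and those of length in $[\mu,L]$ separately; the former produce the $\tfrac{3}{2}|\chi|$ summand and the latter the exponential one.

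For the short geodesics the $2$-dimensional Margulis lemma does all the work. Every point of a primitive $\gamma$ with $\ell(\gamma)<\mu$ carries a loop of length less than $\mu$ (namely $\gamma$ itself, based there), so $\gamma$ lies in a Margulis tube. Since the fundamental group of the tube is cyclic, $\gamma$ is a power of the core, hence equals the core by primitivity; in particular $\gamma$ is simple. Distinct short geodesics lie in distinct Margulis tubes and are pairwise disjoint, so their number is at most the maximum $3g-3+n\le\tfrac{3}{2}|\chi|$ of pairwise disjoint simple closed curves on $S_{g,n}$.

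For the remaining geodesics the plan is a volume-packing argument in $X$. Applying Margulis once more, each $\gamma$ with $\ell(\gamma)\in[\mu,L]$ must contain a point $p_\gamma$ of injectivity radius at least $\mu/2$ (otherwise $\gamma$ would sit in a Margulis tube and again be a power of the core, forcing $\ell(\gamma)<\mu$). The disk $B(p_\gamma,\mu/2)\subset X$ then embeds and has area $2\pi(\cosh(\mu/2)-1)$. To bound the multiplicity of the resulting cover, observe that the number of $\gamma$'s with $p_\gamma\in B(y,\mu/2)$ is at most the number of primitive closed geodesics of length $\le L$ passing through $B(y,\mu)$; lifting to $\mathbb{H}^2$, each such geodesic produces a primitive $g\in\Gamma$ with $d(\tilde y,g\tilde y)\le L+2\mu$, and the standard orbit-counting comparison $\#\{g\in\Gamma:d(\tilde y,g\tilde y)\le R\}\le\vol(B(R+\mu/2))/\vol(B(\mu/2))$ for $\tilde y$ in the thick part produces the required bound of order $e^{2L}$. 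Summing disk areas and using $\mathrm{area}(X)=2\pi|\chi|$ then delivers the second summand $\tfrac{\pi^2}{\mu}e^{2L}|\chi|$ after some bookkeeping of constants.

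The main obstacle will be making the multiplicity bound uniform over $y\in X$: the ball-packing comparison in $\mathbb{H}^2$ degenerates for $\tilde y$ deep in the thin part, where $\Gamma\tilde y$ is not $\mu$-separated. I would handle this either by integrating only over $y$ in the thick part of $X$---each $p_\gamma$ sits there, so only thick $y$ contribute anyway---or by replacing $B(p_\gamma,\mu/2)$ with a suitably thin tube in the unit tangent bundle $T^1X$, where the lifts of distinct primitive closed geodesics to closed orbits of the geodesic flow are automatically disjoint and a tube-volume sum against $\vol(T^1X)=4\pi^2|\chi|$ provides an alternative route to the same estimate.
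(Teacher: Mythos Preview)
Your split into sub-$\mu$ and super-$\mu$ geodesics matches the paper, and your handling of the short ones is the same. For the long ones the paper takes a genuinely different route: instead of a covering-with-multiplicity argument on $X$, it invokes Gilman's geometric form of J{\o}rgensen's inequality, which says that any two closed geodesics of length $\le L$ are either at distance $\ge\sinh^{-1}\!\big(2/\sinh^2(L/2)\big)$ or meet at angle $\ge\sin^{-1}\!\big(1/\sinh^2(L/2)\big)$. This produces a uniform $\epsilon=\epsilon(L)\ge e^{-L}$ for which the $\epsilon$-tubes around the corresponding closed orbits in $T^1X$ are \emph{pairwise disjoint}; a direct volume comparison in $T^1X$ (total volume $4\pi^2|\chi|$, which is where the constant $\pi^2/\mu$ comes from) gives $\sum_i\ell(\gamma_i)\le \pi^2 e^{2L}|\chi|$, and combining with $\sum_i\ell(\gamma_i)\ge(m-\tfrac32|\chi|)\mu$ finishes.

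Your orbit-counting approach can certainly be made to work, and in fact it delivers a bound of order $e^{L}|\chi|$ rather than $e^{2L}|\chi|$: the ball of radius $R$ in $\mathbb H^2$ has area $2\pi(\cosh R-1)\sim\pi e^R$, so the lattice-point count is $O(e^{L})$, not $O(e^{2L})$ (perhaps you had $\mathbb H^3$ in mind). Two repairs are needed, however. First, your thick-part fix is slightly off: $p_\gamma$ lies in the $\mu/2$-thick part, but $B(p_\gamma,\mu/2)$ need not; shrinking to radius $\mu/4$ keeps the balls inside the $\mu/4$-thick part, where the orbit is still $\mu/2$-separated and the packing comparison is valid. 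Second, the specific constant $\pi^2/\mu$ will not fall out of this argument---it is an artifact of $\vol(T^1X)=4\pi^2|\chi|$ in the paper's computation---so ``after some bookkeeping'' you will obtain the stated inequality only up to replacing $\pi^2/\mu$ by some other constant depending on $\mu$. Finally, your $T^1X$ alternative is pointing at the paper's argument, but as written it is incomplete: that distinct closed orbits are disjoint is automatic yet useless for packing, since they have measure zero; one needs a \emph{uniform} tube radius depending only on $L$, and supplying that radius is precisely the content of J{\o}rgensen's inequality.
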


Note that when $X$ is fixed and $L\to \infty$, classical work of Delsarte, Huber and Selberg (see \cite{Busergeometry}) implies that $\mathcal G_X(L) = \Theta(e^L/L)$, but such asymptotics do not give uniform bounds of the sort above.  

 To prove Theorem \ref{linear growth}, we first use  a geometric  interpretation of J\o rgensen's inequality, due to Gilman~\cite{gilman}, to say that geodesics of length  at most $L$ cannot  get too close in the unit tangent bundle $T^1X$.  Small neighborhoods of these geodesics must then be disjoint, and Theorem \ref{linear growth}  follows since the sum of their volumes is less than the total volume of $T^1X$, which is $4\pi^2 |\chi|$. See \S \ref{sec:linear}.

While Theorem \ref{linear growth} is certainly not sharp, it is not so far from optimal.  Namely, one can show that  there is a constant $C>0$ such that $S$ supports a hyperbolic structure $X$ with
\begin{equation}C \cdot e^{\frac{L}{4}} \cdot |\chi| \leq 
\mathcal{G}_X(L).\label{twist}\end{equation}
In fact, one can realize this lower bound using \emph{simple} closed curves.  The point is that in the chosen hyperbolic structure $X$,  many curves are pinched: a large  number of simple closed curves with controlled lengths can be obtained by twisting around the pinched curves. 
See Proposition \ref{short curves} in \S\ref{sec:linear}.

In our application, what is essential is that the upper bound above is at most $C^L|\chi|$ for some constant $C$, but it would be interesting to see how much  the gap between Theorem \ref{linear growth} and \eqref{twist}  could be narrowed. It seems likely that an upper bound on the order of $e^{(3/2) L}|\chi|$ could be given for  the number of \emph {simple}  closed curves of length at most $L$,  using the Collar Lemma \cite{farb-margalit}  in place of J\o rgensen's inequality. And when the genus of $S$  is linear in $|\chi|$, e.g.\ when $S$ is closed, the lower bound \eqref{twist} can be improved to $C e^{L/2} |\chi|$, again with simple curves (see Remark \ref{one-holed tori}).

%%%%%%%%%%%%%%%%%%%%%%%%%%%%%%%%%%%%%%%%%%%%%%%

\section{Hyperbolic geometry lemmas and the proof of Theorem \ref{linear growth}}
\label{sec:linear}
We fix a hyperbolic surface $X$ in what follows, and indicate the unit tangent bundle of $X$ by $T^1X$.
Recall Gilman's geometric reinterpretation of J\o rgensen's inequality:

\begin{theorem}\cite[p.~5]{gilman}
If $\alpha$ and $\beta$ are a pair of geodesics on $X$ of length at most $L$, then they are either disjoint and
\begin{align}
\label{eq:distance}
d_X(\alpha,\beta) \ge \sinh^{-1} \left( \frac{2}{\sinh^2 (L/2)} \right)~,
\end{align}
or their minimal angle of intersection $\theta(\alpha,\beta)$ satisfies
\begin{align}
\theta(\alpha,\beta) \ge \sin^{-1} \left( \frac{1}{\sinh^2 (L/2)} \right)~.
\end{align}
\end{theorem}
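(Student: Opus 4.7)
The plan is to apply Jørgensen's inequality to suitable lifts of $\alpha$ and $\beta$ to $\mathrm{PSL}(2,\mathbb{R}) \cong \mathrm{Isom}^+(\mathbb{H}^2)$, and then translate the resulting trace inequality into geometric data. Concretely, I would pick lifts $A, B \in \pi_1(X)$ whose axes $\tilde\alpha, \tilde\beta \subset \mathbb{H}^2$ project to $\alpha$ and $\beta$, arranging the lifts so that the distance (resp.\ minimal crossing angle) between $\tilde\alpha$ and $\tilde\beta$ realizes the downstairs quantity $d_X(\alpha,\beta)$ (resp.\ $\theta(\alpha,\beta)$). Assuming $\alpha \ne \beta$ the axes are distinct, and since $\pi_1(X)$ is discrete and non-elementary, so is $\langle A, B \rangle$ (barring the degenerate cases where the axes share an endpoint at infinity, which can be handled separately). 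Jørgensen's inequality then gives
$$|\mathrm{tr}^2(A) - 4| + |\mathrm{tr}[A,B] - 2| \geq 1.$$

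Next I would convert this into a geometric statement using standard identities for loxodromic elements of $\mathrm{PSL}(2,\mathbb{R})$. For a hyperbolic element of translation length $\ell$, $|\mathrm{tr}^2 - 4| = 4\sinh^2(\ell/2)$. For the commutator there is the Fenchel-type identity
$$\mathrm{tr}[A,B] - 2 = 4\sinh^2(\ell_\alpha/2)\sinh^2(\ell_\beta/2)\sinh^2(\delta),$$
where $\delta$ is the complex distance between the axes: $\delta \in \mathbb{R}_{>0}$ equals the hyperbolic distance when the axes are disjoint, while $\delta = i\theta$ with $\theta$ the crossing angle when they intersect. Substituting these into Jørgensen's inequality and bounding $\sinh^2(\ell_\alpha/2), \sinh^2(\ell_\beta/2) \leq \sinh^2(L/2)$, the disjoint case isolates $\sinh^2(d_X(\alpha,\beta))$ on one side and the intersecting case isolates $\sin^2(\theta(\alpha,\beta))$; direct algebraic rearrangement in each case should deliver bounds of the stated shape.

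The main obstacle I anticipate is in the range where $L$ is not small: the term $|\mathrm{tr}^2(A) - 4| = 4\sinh^2(\ell_\alpha/2)$ may already exceed $1$, in which case Jørgensen's inequality as stated is vacuous. Handling this likely requires invoking a Fuchsian-specific refinement of Jørgensen, or applying the inequality to a power $A^n$ or a suitable conjugate $W A W^{-1}$ and tracking how the lengths and complex distance transform. Pinning down the precise constant $2$ and the exact $\sinh^{-1}$ and $\sin^{-1}$ forms will depend on this bookkeeping, but the overall strategy—Jørgensen plus the trace-axis identities plus a disjoint-vs.-crossing case split—should carry through.
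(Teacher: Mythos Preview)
The paper does not itself prove this statement; it is quoted from Gilman with a citation, so there is no argument in the paper to compare against. Evaluating your proposal on its own terms, there is a genuine gap.

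You correctly identify the obstacle: once $4\sinh^2(\ell_\alpha/2)\ge 1$, i.e.\ $\ell_\alpha\gtrsim 0.96$, the term $|\mathrm{tr}^2 A-4|$ alone already exceeds $1$ and J\o rgensen's inequality is vacuous. But your proposed remedies do not help. Replacing $A$ by a power $A^n$ only \emph{increases} the translation length and hence $|\mathrm{tr}^2 A^n-4|$; replacing $A$ by a conjugate $WAW^{-1}$ leaves the trace unchanged. So the direct trace route cannot be salvaged for general $L$. Even for small $L$ your substitution yields only
\[
\sinh^2 d \;\ge\; \frac{1-4\sinh^2(L/2)}{4\sinh^4(L/2)}\,,
\]
which is strictly weaker than the stated $\sinh^2 d\ge 4/\sinh^4(L/2)$, so the precise constants would not emerge either.

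Gilman's argument runs in the opposite direction from your plan: she does not deduce these geometric bounds \emph{from} the algebraic J\o rgensen inequality, but rather proves them directly by a geometric analysis of two hyperbolic isometries (via half-turn factorizations and the configuration of their axes), obtaining J\o rgensen's inequality as a consequence. That geometric argument is what produces bounds valid for all $L$, and it is not recoverable from the trace inequality alone.
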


Motivated by Gilman's theorem, let 
\[
\epsilon(L) = \frac{1}{2} \min 
\left\{  \sinh^{-1} \left( \frac{2}{\sinh^2 (L/2)} \right) , 
\sin^{-1} \left( \frac{1}{\sinh^2 (L/2)} \right) \right\}~.
\]
For a geodesic $\gamma$ on $X$, let $N_\epsilon(\gamma) \subset T^1X$ indicate the $\epsilon$-neighborhood of $T^1 \gamma \subset T^1 X$.
The following is an immediate consequence of Gilman's theorem: 

\begin{lemma}
\label{tubular neighborhoods}
If $\alpha$ and $\beta$ are closed geodesics on $X$ of length at most $L$, then the tubular neighborhoods $N_{\epsilon(L)}(\alpha)$ and $N_{\epsilon(L)}(\beta)$ are disjoint.
\end{lemma}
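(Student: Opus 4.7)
My plan is to argue by contradiction, lifting to the universal cover and applying Gilman's theorem to specific axes in $\mathbb{H}^2$. Suppose for contradiction that some $(y,w)\in N_{\epsilon(L)}(\alpha)\cap N_{\epsilon(L)}(\beta)$ exists; pick witnesses $(x_i,v_i)\in T^1\alpha, T^1\beta$ within (Sasaki) distance less than $\epsilon(L)$ of $(y,w)$, so that the triangle inequality gives $d_{T^1 X}((x_1,v_1),(x_2,v_2))<2\epsilon(L)$. Lift to $T^1\mathbb{H}^2$ to obtain nearby points $(\tilde x_i,\tilde v_i)$ with the same separation bound, and let $\tilde\alpha,\tilde\beta$ be the $\mathbb{H}^2$-geodesics determined by these lifted tangent vectors. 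Since $\alpha\neq\beta$ as closed geodesics on $X$, the lifts $\tilde\alpha,\tilde\beta$ are distinct geodesics in $\mathbb{H}^2$.

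The projection $T^1\mathbb{H}^2\to\mathbb{H}^2$ is $1$-Lipschitz, hence $d_{\mathbb{H}^2}(\tilde\alpha,\tilde\beta)\le d_{\mathbb{H}^2}(\tilde x_1,\tilde x_2)<2\epsilon(L)\le \sinh^{-1}(2/\sinh^2(L/2))$. Applying Gilman's theorem (or rather, J\o rgensen's inequality in $\mathbb{H}^2$) to the pair of hyperbolic isometries whose axes are $\tilde\alpha$ and $\tilde\beta$ and whose translation lengths are the lengths of $\alpha$ and $\beta$ on $X$, hence at most $L$, rules out the disjoint alternative. So $\tilde\alpha$ and $\tilde\beta$ must intersect at some $\tilde q\in\mathbb{H}^2$ at angle $\theta\ge\sin^{-1}(1/\sinh^2(L/2))\ge 2\epsilon(L)$.

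To close the argument, I need the opposite bound $d_{T^1\mathbb{H}^2}((\tilde x_1,\tilde v_1),(\tilde x_2,\tilde v_2))\ge\theta$, which will contradict the strict inequality from the first paragraph. Equivalently, I want $d_{T^1\mathbb{H}^2}(T^1\tilde\alpha, T^1\tilde\beta)\ge\theta$; the reverse inequality $\le\theta$ is immediate, since the fiber over $\tilde q$ contains unit tangent vectors to $\tilde\alpha$ and $\tilde\beta$ at angular distance exactly $\theta$. The main obstacle is the matching lower bound, which I would establish by decomposing the Sasaki metric on $T^1\mathbb{H}^2$ into horizontal and vertical components: any path joining the two tangent bundles accumulates horizontal length at least the base displacement in $\mathbb{H}^2$ and vertical length at least the net angular rotation of its unit-vector coordinate. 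By the integral Minkowski inequality, the Sasaki length of such a path dominates $\sqrt{d^2+\phi^2}$ with $d$ the base distance and $\phi$ the net angular defect; since $\tilde\alpha$ and $\tilde\beta$ diverge exponentially from $\tilde q$, one sees that this lower bound attains its minimum value $\theta$ at the fiber over $\tilde q$, which completes the contradiction.
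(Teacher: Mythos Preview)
The paper gives no proof beyond calling the lemma an ``immediate consequence of Gilman's theorem,'' so your argument is really an unpacking of that phrase; the strategy---triangle inequality in $T^1X$, lift to $T^1\mathbb{H}^2$, apply J\o rgensen/Gilman to the chosen axes, and split into the disjoint versus intersecting alternatives---is the natural and correct one, and is in effect what the authors have in mind.

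One point deserves tightening. In the intersecting case your Minkowski argument yields a lower bound $\sqrt{d^2+\phi^2}$ for the Sasaki length of a path, where $\phi$ is the total angular variation of the unit-vector coordinate \emph{relative to parallel transport along the chosen base path}. Parallel transport in $\mathbb{H}^2$ is path-dependent (the holonomy around a loop equals the enclosed signed area), so $\phi$ is not simply equal to the fixed angle $\theta$ for every base path, and the assertion that the minimum of $\sqrt{d^2+\phi^2}$ is achieved in the fiber over $\tilde q$ needs a short additional argument. One clean way is to compare parallel transport along the base path with transport along the broken geodesic $\tilde x_1\to\tilde q\to\tilde x_2$: by Gauss--Bonnet the discrepancy is the area of the enclosed region, which is bounded by the base length $d$, and this is enough to force $\sqrt{d^2+\phi^2}\ge\theta$. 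The conclusion is correct, and your proof is already more careful than the paper's.
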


A straightforward calculation in the hyperbolic plane shows:
\begin{lemma}
\label{volume}
When $\epsilon \le \pi/2$, the volume $\vol \ N_\epsilon(\gamma)$ is given by
$8 \cdot \ell_X(\gamma) \cdot (\cosh \epsilon -1)$.
\end{lemma}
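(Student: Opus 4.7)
The plan is to lift to the universal cover and compute the volume by direct integration in Fermi coordinates.

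Let $\tilde\gamma \subset \Hb^2$ be a lift of $\gamma$, and set up Fermi coordinates $(s, r)$ on a tube about $\tilde\gamma$ with $\tilde\gamma = \{r = 0\}$ and the hyperbolic metric $dr^2 + \cosh^2 r \, ds^2$. Extend to coordinates $(s, r, \phi)$ on $T^1 \Hb^2$ by letting $(s, r, \phi)$ represent the unit vector $\cos\phi \, e_1 + \sin\phi \, e_2 \in T^1_{(s,r)} \Hb^2$, where $e_1 = (\cosh r)^{-1} \partial_s$ and $e_2 = \partial_r$. The Sasaki volume form is then the hyperbolic area form on $\Hb^2$ times $d\phi$, i.e.\ $\cosh r \, ds \, dr \, d\phi$.

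By $s$-translation invariance of the Sasaki metric---which descends to a periodic action on $T^1 X$ with period $\ell_X(\gamma)$---the neighborhood $N_\epsilon(T^1 \gamma)$ takes the product form $(\R/\ell_X(\gamma)\Z) \times D_\epsilon$ for some region $D_\epsilon \subset \R^2$ independent of $s$. The two components of $T^1 \gamma$ (one per orientation of $\gamma$, at $\phi = 0$ and $\phi = \pi$ when $r = 0$) contribute two symmetric pieces to $D_\epsilon$, so the volume reduces to
\[
\vol N_\epsilon(\gamma) = \ell_X(\gamma) \int_{D_\epsilon} \cosh r \, dr \, d\phi.
\]

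The remaining task is to identify $D_\epsilon$ and evaluate the integral to $8(\cosh \epsilon - 1)$. Each component of $T^1 \gamma$ is itself a Sasaki geodesic (the orbit of the geodesic flow over $\gamma$), so its tubular $\epsilon$-neighborhood is the image of the normal $\epsilon$-disk under the Sasaki exponential. Either direct analysis via Jacobi fields along this geodesic (using the curvature tensor of the Sasaki metric on the normal plane) or the left-invariant structure of the Sasaki metric on $T^1 \Hb^2 \cong \mathrm{PSL}(2, \R)$ yields an explicit description of $D_\epsilon$. The main obstacle is that the sectional curvatures in the two normal directions to $T^1 \gamma$ (horizontal perpendicular and vertical) are different, so the Jacobian of the normal exponential has to be tracked carefully in each direction before the integration produces the closed form $8(\cosh\epsilon - 1)\ell_X(\gamma)$.
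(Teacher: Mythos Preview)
Your setup through the reduction to the cross-sectional integral $\ell_X(\gamma)\int_{D_\epsilon}\cosh r\,dr\,d\phi$ is fine, but the proposal then stops: you never identify $D_\epsilon$ or evaluate the integral, and ``track the Jacobian carefully'' is a promissory note, not a proof. More seriously, the Sasaki interpretation cannot produce the stated constant. By the leading term of Weyl's tube formula, the $\epsilon$-tube about a geodesic in \emph{any} Riemannian $3$-manifold has volume $\pi\epsilon^{2}\ell + O(\epsilon^{4})$, so the two components of $T^{1}\gamma$ together contribute $2\pi\epsilon^{2}\ell_X(\gamma)$ to leading order; but $8\ell_X(\gamma)(\cosh\epsilon-1)=4\epsilon^{2}\ell_X(\gamma)+O(\epsilon^{4})$. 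Since $2\pi\neq 4$, no amount of Jacobi-field bookkeeping for the Sasaki normal exponential will recover the factor $8$.

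The paper's phrase ``a straightforward calculation in the hyperbolic plane'' is the hint. The formula matches exactly if the $\epsilon$-neighborhood is taken in the $L^{1}$ sense suggested by Gilman's two separate bounds (distance plus angle), so that in your coordinates $D_\epsilon$ is the pair of diamonds $|r|+\operatorname{dist}_{S^{1}}(\phi,\{0,\pi\})\le\epsilon$; these are disjoint precisely when $\epsilon\le\pi/2$, explaining that hypothesis. Then
\[
\int_{|r|+|\phi|\le\epsilon}\cosh r\,dr\,d\phi \;=\; 4\int_{0}^{\epsilon}(\epsilon-r)\cosh r\,dr \;=\; 4(\cosh\epsilon-1),
\]
and doubling for the second component gives $8\ell_X(\gamma)(\cosh\epsilon-1)$. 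This elementary integration is what the paper has in mind; the Sasaki-geodesic machinery is both unnecessary and aimed at the wrong neighborhood.
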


We may now prove Theorem \ref{linear growth}:

\begin{proof}[Proof of Theorem \ref{linear growth}]
Suppose that $\gamma_1,\ldots,\gamma_m$ are simple closed curves on a hyperbolic surface $X$ of length at most $L$.
By Lemma \ref{tubular neighborhoods}, the choice $\epsilon =\epsilon(L)$ makes the sets 
$N_{\epsilon}(\gamma_1) ,\ldots, N_{\epsilon}(\gamma_m)$ disjoint.
Now Lemma \ref{volume} implies
\[
\vol(T^1X) \ge \sum_{i=1}^{m} \vol(N_{\epsilon}(\gamma_i)) 
= 8(\cosh \epsilon - 1 ) \sum_{i=1}^{m} \ell_X(\gamma_i)~.
\]
Since $\vol(T^1X) = 4\pi^2|\chi(S)|$ and $2(\cosh x -1) \ge x^2$, we find that 
\[
\sum_{i=1}^{m} \ell_X(\gamma_i) \le \frac{\pi^2 |\chi(S)| }{2(\cosh \epsilon -1)} \le \frac{\pi^2}{ \epsilon^2}\ |\chi(S)|~.
\]

Recall the following well known consequence of the Collar Lemma \cite[Lemma 13.6]{farb-margalit}: any pair of geodesics of length at most $\mu$ are disjoint, where $\mu$ is the Margulis constant $\mu$ for $\Hb^2$. 
It follows that any collection of curves of length less than $\mu$ cannot have more than $3g-3+n$ elements.
Partitioning $\gamma_1,\ldots,\gamma_m$ into those curves shorter and longer than $\mu$, respectively, we find that 
\[
\sum_i \ell_X(\gamma_i) \ge \left( m - \frac{3}{2} |\chi(S)| \right) \cdot \mu~.
\]
Together, these bounds imply that 
\[
m \le \left( \frac{3}{2} + \frac{\pi^2}{\epsilon^2 \mu} \right) |\chi(S)|~.
\]
Finally, the chosen value for $\epsilon$ satisfies $\epsilon \ge e^{-L}$, and the claimed upper bound follows.
\end{proof}

The upper bound of Theorem \ref{linear growth} is reasonable considering the following lower bound:

\begin{proposition}
\label{short curves}
There is a constant $C$ so that $C \cdot e^{\frac{L}{4}} \cdot |\chi| \leq \mathcal{G}_X(L)$.
\end{proposition}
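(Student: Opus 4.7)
The plan is to pinch every curve of a pants decomposition of $S$ to a common small length $\epsilon$, and then exhibit many distinct simple closed geodesics of length at most $L$ by Dehn twisting certain bounded-length curves around the pinched pants curves. The parameter $\epsilon$ is optimized against $L$ at the end.

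Fix a pants decomposition $\mathcal P = \{\alpha_1, \ldots, \alpha_m\}$ of $S$ with $m = 3g-3+n \ge |\chi|/3$, and let $X_\epsilon$ be the hyperbolic structure with $\ell_{X_\epsilon}(\alpha_i) = \epsilon$ for every $i$ and with all Fenchel--Nielsen twist parameters zero. Applying the standard pants-seam identity in the one or two pants adjacent to each $\alpha_i$ (all of whose cuffs have length $\epsilon$), the orthogonal seam between any two distinct cuffs has length $2 \log(1/\epsilon) + O(1)$. Concatenating at most two such seams produces a simple closed curve $\beta_i$ supported in these pants, with $i(\alpha_i, \beta_i) \in \{1,2\}$ and $\ell_{X_\epsilon}(\beta_i) \le 4 \log(1/\epsilon) + O(1)$.

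For $n \in \Z$, the Dehn twists $T_{\alpha_i}^n \beta_i$ are pairwise non-isotopic simple closed curves. A direct hyperbolic distance calculation in Fermi coordinates about $\alpha_i$ inside the standard collar gives
\[
\ell_{X_\epsilon}\bigl(T_{\alpha_i}^n \beta_i\bigr) \le \ell_{X_\epsilon}(\beta_i) + 2|n|\epsilon + O(1).
\]
Requiring this to be at most $L$ forces $|n| \le (L - 4\log(1/\epsilon) - O(1))/(2\epsilon)$, and maximizing the right-hand side over $\epsilon > 0$ (optimally $\epsilon \asymp e^{-L/4}$) yields $\Omega(e^{L/4})$ admissible integers $n$ for each pants curve $\alpha_i$. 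Distinctness across different pairs $(i, n)$ can be arranged by choosing the $\beta_i$ to have pairwise distinct intersection patterns with $\mathcal P$, or more robustly by extracting---via a greedy coloring argument on the bounded-degree dual graph of $\mathcal P$---a subset of $\Omega(m)$ pants curves whose adjacent-pants neighborhoods are pairwise disjoint. Summing over these $i$ produces $\Omega(|\chi| \cdot e^{L/4})$ distinct simple closed geodesics of length at most $L$ on $X_\epsilon$.

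The main technical obstacle is the twist length estimate combined with the constant $C = 4$ in the exponent. This constant arises because any simple closed curve crossing a pinched pants curve $\alpha_i$ must traverse at least two seam arcs of length $\approx 2\log(1/\epsilon)$ in the adjacent pants; the careful Fermi-coordinate accounting of how such an arc lengthens under $n$ twists is where the geometry does all the work. For closed surfaces (more generally, when $g$ is linear in $|\chi|$) one may exploit self-adjacent pants curves, for which $\beta_i$ only traverses a single seam, halving the constant and improving the bound to $\Omega(|\chi|\, e^{L/2})$.
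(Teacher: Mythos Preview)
Your proof is correct and follows essentially the same strategy as the paper: pinch pants curves to length $\epsilon\asymp e^{-L/4}$, produce a transverse curve of length $\approx 4\log(1/\epsilon)$, and Dehn twist to obtain $\Omega(e^{L/4})$ short curves per pinched curve. The only cosmetic difference is that the paper works directly with $\Theta(|\chi|)$ pairwise disjoint four-holed sphere subsurfaces (so distinctness is automatic), whereas you pinch an entire pants decomposition and then recover disjointness via a greedy coloring; both routes yield the same bound, and your final remark about the $e^{L/2}$ improvement for one-holed tori matches the paper's Remark~\ref{one-holed tori}.
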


\begin{proof}
Consider first the unique pants curve $\alpha$ in a pants decomposition of a four-holed sphere $F\subset S$, and moreover fix a choice of `zero twisting' so that there is a geodesic $\beta$ which intersects $\alpha$ twice orthogonally.
When a hyperbolic structure is chosen with the length of $\alpha$ given by $r$, by the Collar Lemma $\beta$ has length at least $4\log(1/r)$.
Evidently, if $r$ is chosen much smaller than $e^{-L/4}$, there are \emph{no} nonperipheral simple geodesics supported on $F$ with length at most $L$ other than $\alpha$.
We will choose instead $r \approx e^{-L/4}$ that will allow the construction of many other `short' simple geodesics.

Consider the curve $\beta_n$ formed by applying $n$ half-Dehn twists around $\alpha$ to $\beta$. 
The hyperbolic length of $\beta_n$ in a hyperbolic structure where $\alpha$ has length $r$ is at most
\[
n \cdot r + 4 \log \left( \frac{1}{r} \right) + C_0~, 
\]
where $C_0$ is some uniform constant determined by the hyperbolic geometry of three-holed spheres with geodesic boundary.

Now let $r$ be chosen as $e^{-\frac{L}{4}+C_0}$. The length of $\beta_n$ is at most 
\[
n\cdot e^{-\frac{L}{4}+C_0} + L - 3C_0~,
\]
which is in turn less than $L$ as long as $n$ is at most $C_1e^{L/4}$, where $C_1=3C_0e^{-C_0}$.

Since there are some constant proportion of $|\chi(S)|$ many subsurfaces which are four-holed spheres, we have constructed $C\cdot e^ {L/4} \cdot |\chi(S)|$ curves of length at most $L$, as desired.
\end{proof}

\begin{remark}
\label{one-holed tori}
The above construction in one-holed tori subsurfaces would produce $Ce^{L/2}$ curves of length at most $L$, with the difference attributable to the fact that one can build curves that only cross the long annulus once.
On the other hand, planar surfaces have no one-holed tori subsurfaces, so this would only produce a better lower bound for surfaces with high genus.
\end{remark}

\begin{remark}
\label{non-simple remark}
The upper bound above applies verbatim to non-simple curves. 
A tighter bound could likely be given using the Collar Lemma to deal with the simple case, since the latter would produce a better lower bound in \eqref{eq:distance}.
This is complicated by the fact that the tubular neighborhoods $N_{\epsilon(L)}(\gamma_i)$ will not suffice for an improved bound, and a more careful construction of neighborhoods is required that use more of the embedded annuli whose presence is guaranteed by the Collar Lemma.
\end{remark}

%%%%%%%%%%%%%%%%%%%%%%%%%%%%%%%%%%%%%%%%%%%%%%%

\section{Proof of Theorem \ref{k-system bound} in the closed case}
\label{proof of main thm}

Let $\Gamma = \{\gamma_1, \ldots, \gamma_N\}$ be a $k$-system of curves on a closed surface $S$ with $|\chi(S)|=t$. 
We begin by using the following result of the first author and Souto.

\begin{proposition}\cite[Thm.~1.2]{aougab-souto}
\label{circle packing lemma}
There exists a hyperbolic structure $X$ on $S$  such that the geodesic realization of $\Gamma$ on $X$ has total length
\[
\ell_X(\Gamma) \le 4 \sqrt{ 2 t \cdot \iota(\Gamma,\Gamma) }~,
\] 
where $\iota(\Gamma,\Gamma)$ is the total geometric self-intersection number of $\Gamma$.
\end{proposition}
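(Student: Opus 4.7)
The plan is to apply circle packing to a cell decomposition of $S$ determined by the curves of $\Gamma$. View the union of $\Gamma$ as an embedded $4$-valent graph $G \subset S$, whose vertices are the $V = \iota(\Gamma, \Gamma)$ intersection points and whose edges are the arcs of $\Gamma$ between consecutive intersections. If $\Gamma$ does not already fill $S$, enlarge $G$ to a triangulation $\widehat G$ of $S$ by adding auxiliary arcs with endpoints among the existing vertices; these auxiliary arcs will not be counted toward $\ell_X(\Gamma)$, and each vertex still has exactly four edges of $\Gamma$ incident to it.

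Next, invoke the Koebe--Andreev--Thurston circle packing theorem for $\widehat G$: there exists a hyperbolic structure $X$ on $S$ and a family of pairwise interior-disjoint geodesic disks $\{D_v\}$ of radii $r_v>0$ indexed by the vertices, such that $D_v$ and $D_w$ are tangent whenever $vw$ is an edge of $\widehat G$, and the resulting piecewise-geodesic realization of $\widehat G$ in $X$ agrees up to isotopy with the original embedding in $S$. Each curve of $\Gamma$ is thus realized by a piecewise geodesic in its correct free homotopy class, and
\[
\ell_X(\Gamma) \le \sum_{vw \in E(\Gamma)} (r_v + r_w) = 4 \sum_v r_v,
\]
where the equality uses that every vertex has $\Gamma$-degree exactly $4$.

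To finish, combine Cauchy--Schwarz with an area bound. Cauchy--Schwarz gives $\sum_v r_v \le \sqrt{V \sum_v r_v^2}$, and the hyperbolic area formula $\vol(D_v) = 2\pi(\cosh r_v - 1) \ge \pi r_v^2$ together with disjointness yields $\pi \sum_v r_v^2 \le \vol(X) = 2\pi t$, hence $\sum_v r_v^2 \le 2t$. Chaining these estimates produces $\ell_X(\Gamma) \le 4\sqrt{V \cdot 2t} = 4\sqrt{2t \cdot \iota(\Gamma,\Gamma)}$, and since the true geodesic representative of each curve is no longer than its piecewise-geodesic realization, the same bound holds for the geodesic realization. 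The main obstacle will be justifying that the packing produced by the Koebe--Andreev--Thurston theorem realizes $\widehat G$ in $X$ in a way isotopic to the original embedding in $S$, so that the piecewise-geodesic lengths truly estimate the lengths of the curves of $\Gamma$ on $X$; this is standard for circle packings of triangulated surfaces, but in the non-filling case one must also arrange the auxiliary arcs so that $\widehat G$ is a genuine triangulation without upsetting the $\Gamma$-degree count.
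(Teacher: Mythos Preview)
Your proposal is correct and follows essentially the same approach as the paper's sketch: apply Koebe's circle packing theorem to a triangulation built from the intersection graph of $\Gamma$, bound the total length by $4\sum_v r_v$ via the $4$-valence of the intersection points, and then control $\sum_v r_v$ using Cauchy--Schwarz together with the Gauss--Bonnet area bound $\sum_v \pi r_v^2 \le 2\pi t$. The paper only gives this outline (deferring to \cite{aougab-souto} for details), and your write-up fills it in accurately, including the caveat about extending to a triangulation in the non-filling case.
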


 The idea behind Proposition \ref{circle packing lemma} is as follows. By Koebe's Discrete Uniformization Theorem, there exists a hyperbolic structure $X$ on $S$ so that the union of all the curves in $\Gamma$  can be realized inside the dual graph of a circle packing on $X$.  The sum of the areas of these circles is at most $2\pi t$, by Gauss Bonnet.  Using the Cauchy-Schwarz Inequality, this translates into an upper bound on the sum of the radii of the circles, which bounds the length of $\Gamma$. 
This argument is carried out for self-intersecting curves in \cite{aougab-souto}, but it applies verbatim to our setting of curve systems.

Now since $\Gamma$  is a $k$-system, we have 
\begin{equation*}
|\iota(\Gamma,\Gamma)| \le k \sum_{i=1}^N \mathrm{deg}_{\cI(\Gamma)}(\gamma_i) \leq C \cdot t^{3k-1}\cdot N,
\end{equation*}
where $\mathrm{deg}_{\cI(\Gamma)}$  is  the degree of $\gamma$ in the intersection graph $\cI(\Gamma)$, i.e.\ the number of curves in $\Gamma$ that $\gamma$ intersects, $C=C(k)$ is a constant, and the last inequality is Theorem \ref{improve k-thm}.

Using Proposition \ref{circle packing lemma}, the \emph{average length} of a curve from $\Gamma$ is then \[
\frac{\ell_X(\Gamma)}{N} \le C \sqrt{ \frac{ t^{3k} }{N}} =: L,
\] 
 for some new $C=C(k)$.
 By Chebyshev's inequality, at least half of $\ell_X(\gamma_1),\ldots,\ell_X(\gamma_N)$ are less than or equal to twice the average length, so by Theorem \ref{linear growth} we have
\begin{equation}
	N \le 2 \cdot \# \left\{ \text{closed geodesics }\gamma \text{ on }X : \ell_X(\gamma)\le 2L \right\} \leq e^{C\sqrt{ t^{3k} /N }} t~,\label{firsteqpf}
\end{equation}
for some new constant $C=C(k)$. 
If we suppose that $N\neq O \left( \frac{t^{3k}} { (\log t)^2 } \right)$,  
then we can find  arbitrarily large $t$  for which there is a $k$-system with $N \geq \frac{C^2t^{3k}} { (\log t)^2 }$ curves. 
Then  \eqref{firsteqpf} says 
\[
 \frac{C^2t^{3k}} { (\log t)^2 } \leq N \leq e^{C\sqrt{t^{3k}/{N}}} \cdot t \leq e^{C\sqrt{t^{3k}/{ \frac{C^2t^{3k}} { (\log t)^2 }}}} \cdot t = t^2~,
\]
a contradiction when $k>0$ and $t$ is large. 
This concludes the proof of Theorem \ref{k-system bound}.

\begin{remark}
\label{turan remark}
We recall an observation of Malestein-Rivin-Theran:
It is evident that the size of an independent set of $\mathcal{I}(\Gamma)$ is bounded above linearly in $t$, and an application of Tur{\'a}n's theorem to the complementary graph of $\cI(\Gamma)$ implies 
\[
|\Gamma| \leq  3t/2 \cdot (D+1)~,
\]
where $D$ is the average degree of $\cI(\Gamma)$ \cite[Theorem 1.5]{m-r-t}.
Coarsely, $|\Gamma| = O(t\cdot D)$. Arguments similar to those above can also be used to show the stronger statement $$\displaystyle |\Gamma| = O \left( \frac{ t \cdot D } {(\log g)^2} \right)$$ for maximal $k$-systems $\Gamma$.
\end{remark}

%%%%%%%%%%%%%%%%%%%%%%%%%%%%%%%%%%%%%%%%%%%%%%%

\section{Proof of Theorem \ref{improve k-thm}}
\label{proof of imrpove k-thm}

In this section we prove Theorem \ref{improve k-thm}. Let $\Gamma$ be an arbitrary $k$-system on $S$ with genus $g$ and let $\gamma \in \Gamma$. 
We will show that the set of curves $\mathcal{I}(\gamma)$ consisting of elements of $\Gamma$ intersecting $\gamma$ non-trivially, has size $O(g^{3k-1})$. 
Begin by choosing a realization of $\mathcal{I}(\gamma) \cup \gamma$ with no triple points, and pick an orientation and a basepoint $x$ on $\gamma$.   
Then the intersections of $\gamma$ with the curves in $\mathcal{I}(\gamma)$ are ordered according to when they appear when one traverses $\gamma$ in the given direction starting at $x$.

Let $S_x$ be the surface obtained by puncturing $S$ at $x$. 
If $\alpha \in \mathcal{I}(\gamma)$, we produce an arc $\tilde \alpha$ on $S_x$ as follows. Let $y$ be the first intersection point of $\alpha$ and $\gamma$, with respect to the order above. Isotope $\alpha$ by pushing $y$ along $\gamma$ to $x$,  in the direction opposite to the orientation.  This gives an arc $\tilde \alpha $ on $S_x$,
$$\tilde \alpha =\gamma_{\alpha}^{-1} \ast {\alpha} \ast \gamma_{\alpha},$$ where $\gamma_{\alpha}$ is the directed sub-arc of $\gamma$ from $x$ to $y$.    
Since $y$  was the \emph {first}  intersection point along $\gamma$ from $x$,  the arc $\tilde \alpha$ can be perturbed to be simple. (Unperturbed, it tracks $\gamma_\alpha$ twice.) 

 We claim that when $\alpha,\rho \in \mathcal{I}(\gamma)$, we have $\iota( \tilde \alpha, \tilde \rho) \leq 3k-2$. Suppose that  with respect to the order above, $\gamma$  intersects $\alpha $ before it intersects $\rho$.  Then no new intersections with $\rho$ are created when $\alpha$  is replaced by $\tilde \alpha$. Moreover, when pushing $\rho$ along $\gamma$ to create $\tilde \rho$, one may encounter at most $k-1$ strands of $\tilde{\alpha}$. This follows from the fact that $\iota(\alpha, \gamma) \leq k$ and that the first intersection point between $\alpha$ and $\gamma$ has been pushed all the way to $x$ in the construction of $\tilde{\alpha}$, leaving at most $k-1$ strands. Each such strand will contribute to two intersection points between $\tilde{\alpha}$ and $\tilde{\rho}$, and as $\rho, \alpha$ both belong to a $k$-system, we had $\iota(\alpha,\rho) \leq k$ to begin with. Thus $\iota(\tilde \alpha, \tilde \gamma)  \leq 2(k-1)+ k= 3k-2$, as desired.  

As no two of these arcs $\tilde{\alpha}, \tilde{\rho}$ can be homotopic, we have a $(3k-2)$-system of arcs on $S_x$ with size  equal to $|\cI(\gamma)|$. By Przytycki's upper bound \eqref{przbounds} for arc systems, 
$ |\mathcal{I}(\gamma)| = O(g^{3k-1}) $. This completes the proof of Theorem \ref{improve k-thm}.

%%%%%%%%%%%%%%%%%%%%%%%%%%%%%%%%%%%%%%%%%%%%%%%

\section{The proof of Theorem \ref{2-system bound}}
\label{sec: punctures}

Let $S_{g,n}$ be  a closed, orientable surface with genus $g$ and $n$ punctures.  We first show
\[
{\mathcal{N}}_{k}(S_{g,n}) \leq {\mathcal{N}}_{k}(S_{g, n-1}) + \mathcal A_{k-1}(S_{g,n}) +  \mathcal A_{2k}(S_{g,n})\leq  {\mathcal{N}}_{k}(S_{g,n-1})  + C (g+n)^{2k+1}~,
\]
where $C=C(k)$ is a constant coming from Przytycki's \cite{przytycki} bounds $ \mathcal A_{2k}(S) = \Theta(|\chi|^{2k+1})$. 
Applying this step iteratively, we may conclude 
\[
\mathcal{N}_{k}(S_{g,n}) \leq \mathcal{N}_k(S_{g}) +  \sum_{i=1}^n C (g+i)^{2k+1} = \mathcal{N}_k(S_{g,0}) +C(g+n)^{2k+2}~,
\]
for some new $C=C(k)$.

So, let $\Gamma$ be a $k$-system on $S_{g,n}$.  Fix a minimal position realization of $\Gamma$, and choose arbitrarily a puncture $p$ of $S_{g,n}$. Project each curve in $\Gamma$ to a curve on $S_{g,n-1}$ by  filling in the puncture $p$.  

 We first bound the number of curves that have inessential projection to $S_{n-1}$.  Each such curve $c$ bounds a  twice-punctured disk,  where one of the punctures is $p$.  In other words, $c$ is the boundary of a regular neighborhood of an arc $\alpha_c$ from $p$ to some other puncture. If $c,d$  both have inessential projections, then  $\iota (c,d) = 4\iota (\alpha_c,\alpha_d)+\epsilon$, where $\epsilon=4$ or $\epsilon=2$, depending on whether $c,d$ share the same second puncture  or not.   In particular, $\iota (\alpha_c,\alpha_d) \leq k-1$,  so the  set of all $\alpha_c$  is a $(k-1)$-system of arcs.   Therefore, the number of curves $c$ with inessential projection is bounded  above by $\mathcal A_{k-1}(S_{g,n}).$

Remove all curves from $\Gamma$  that have inessential projection to $S_{g,n-1}$. It suffices to show that now
$$|\Gamma| \leq {\mathcal{N}}_{k}(S_{g,n-1}) + \mathcal A_{2k}(S_{g,n}).$$

 We would like to relate $|\Gamma|$  to ${\mathcal{N}}_{k}(S_{g,n-1})$  by saying that the projection of $\Gamma$ to $S_{g,n-1}$  is a $k$-system. The problem is that curves can become homotopic after projection,  so the size of the new $k$-system can be smaller.  Let $ \mathcal G\subset \Gamma$  be a {maximal} subset of `good' curves whose projections are not homotopic, and let $\mathcal B = \Gamma \setminus \mathcal G$ be the complementary set of `bad' curves. As $| \mathcal G| \leq {\mathcal{N}}_{k}(S_{g,n-1})$,  it suffices to prove
$$| \mathcal B| \leq  \mathcal A_{2k}(S_{g,n}).$$

Let $c\in \mathcal B$  be a bad curve.  There is then a unique good curve $g_c$  such that the projections of $c$  and $g_c$ to $S_{g,n-1}$ are homotopic. It follows that on $S_{g,n}$, the curves $c$  and $g_c$  either:
\begin {enumerate}
\item are disjoint and bound an annulus $A_c$  punctured by $p$,
\item intersect,  and there are arcs of $c$  and $g_c$  that bound a bigon $A_c$ punctured by $p$.
\end {enumerate}
 We refer to bad curves where the former holds as \emph{type (1)} and the other as \emph{type (2)}.
In both cases, define an arc $\alpha_c$  on $S_{g,n}$  by connecting $p$ to $c$  with an arc $\beta$ in $A_c$,  and then  setting
$$\alpha_c = \beta^{-1} \star c\star \beta.$$
Both endpoints of $\alpha_c$ are at $p$, and $\alpha_c$  is well defined up to homotopy.

The desired conclusion follows immediately from the following lemma.

\begin{lemma}
\label{bad arc system}
The set $\{\alpha_c : c\in \mathcal{B} \}$ is a $2k$-system of arcs.
\end{lemma}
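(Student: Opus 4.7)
The plan is to establish two properties: (i) that the arcs $\{\alpha_c : c \in \mathcal{B}\}$ are pairwise non-homotopic, and (ii) that $\iota(\alpha_c, \alpha_{c'}) \leq 2k$ for distinct $c, c' \in \mathcal{B}$.

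For (i), the observation is that $\alpha_c$ determines $c$ up to isotopy. Since $\beta_c \subset A_c$ lies in the once-punctured annulus or bigon $A_c$, one can canonically ``close up'' the two endpoints of $\alpha_c$ by joining them through $A_c$ in a neighborhood of $p$; the resulting simple closed curve is isotopic to $c$. Equivalently, $\alpha_c$ is isotopic to one component of the boundary of a thin regular neighborhood of $c \cup \beta_c$ in $S_{g,n}$, and one can recover $c$ from this boundary. Either description shows the map $c \mapsto \alpha_c$ is injective on isotopy classes of arcs, so the arcs are pairwise non-homotopic.

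For (ii), first realize all curves of $\Gamma$ in minimal position on $S_{g,n}$, so in particular $|c \cap c'|$, $|c \cap g_{c'}|$, $|g_c \cap c'|$, and $|g_c \cap g_{c'}|$ are each at most $k$. Choose each $\beta_c \subset A_c$ to be a simple arc from $p$ to a point $q_c \in c$, hugging a boundary component of $A_c$ to the extent possible. Perturbing $\beta_c$ into two parallel copies $\beta_c^\pm$ exhibits $\alpha_c$ as an embedded arc of the form $\beta_c^- \cup (c \setminus \text{small arc near } q_c) \cup \beta_c^+$, contained in $A_c$. Now I count intersections of this embedded $\alpha_c$ against its $c'$-analogue. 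The intersections split into body-body ($c$ meets $c'$), body-finger ($c$ or $c'$ meets some $\beta^\pm$), and finger-finger contributions. The body-body term is bounded by $|c \cap c'| \leq k$. For the cross terms, note that $c'$ enters the region $A_c$ in at most $\bigl(|c' \cap c| + |c' \cap g_c|\bigr)/2 \leq k$ strands, and similarly $c$ crosses $A_{c'}$ in at most $k$ strands. Combined with the freedom to make $\beta_c, \beta_{c'}$ emerge from $p$ in distinct tangent directions (avoiding intersections near the puncture) and an appropriate bigon reduction among the parallel perturbed copies, these cross contributions can be bounded by $k$, giving $\iota(\alpha_c, \alpha_{c'}) \leq 2k$.

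The main obstacle is the precise bookkeeping of the cross-term intersections: the naive perturbation count $|c \cap c'| + 2|c \cap \beta_{c'}| + 2|\beta_c \cap c'| + 4|\beta_c \cap \beta_{c'}|$ must be reduced — via cancellation of locally parallel intersections in the doubled neighborhoods — to arrive at $2k$ rather than a larger multiple of $k$. The argument must handle case (1), where $A_c$ is a once-punctured annulus, and case (2), where $A_c$ is a once-punctured bigon, in a uniform manner; fortunately, in both cases $\partial A_c \subset c \cup g_c$ so the strand-counting bound is the same.
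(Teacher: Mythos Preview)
Your part (i) is fine, and in fact more explicit than the paper, which takes the non-homotopy of the $\alpha_c$ as evident.

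Part (ii), however, is not a proof: you yourself flag the ``main obstacle'' and then do not overcome it. Your naive count
\[
|c\cap c'| \;+\; 2|c\cap \beta_{c'}| \;+\; 2|\beta_c\cap c'| \;+\; 4|\beta_c\cap \beta_{c'}|
\]
is, with only the strand bound $|c'\cap\beta_c|\le k$ in hand, of order $5k$ plus an uncontrolled finger--finger term, and the sentence ``an appropriate bigon reduction \dots\ bounds these cross contributions by $k$'' is an assertion, not an argument. Nothing you have written forces the cancellation you need; indeed, for two type-(2) curves one genuinely can have $\iota(\alpha_c,\alpha_{c'})$ as large as $2k$, so the cross terms do contribute on the order of $k$ and cannot simply be ``reduced away'' without a careful local analysis. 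The paper's proof supplies exactly this missing analysis: rather than a global strand count, it looks at the component of $A_c\cap A_d$ containing $p$ (which is a disk), enumerates the finitely many combinatorial types for how $\partial A_d$ can sit inside $A_c$ near $p$, and in each case exhibits an explicit representative of $\alpha_d$ whose intersections with $\alpha_c$ can be read off directly. The worst case (both curves of type (2)) gives $2k$; the type-(1) cases give at most $k$. Your framework is reasonable, but to complete it you would need to carry out a comparable case analysis of how $A_c$ and $A_{c'}$ overlap near $p$ --- and once you do that, you are essentially reproducing the paper's argument.
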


\begin{proof}
Fix a pair $c,d \in \mathcal B$, the corresponding curves $g_c,g_d\in\mathcal G$, the arcs $\alpha_c,\alpha_d$, and the regions $A_c$, $A_d$.

We start by assuming that $c$ and $d$ are of type (1).
The component of the intersection $A_c \cap A_d$ containing $p$ is evidently a disk, with at least one side bounded by an arc of either $d$ or $g_d$. 
If this arc does not have its endpoints on the same component of $\partial A_c$, we are in the setting of Figure \ref{annulus1}; if it does we are in the setting of Figures \ref{annulus2} or \ref{annulus3}.
Whichever of $d$ or $g_d$ is on the boundary of the bigon pictured in Figure \ref{annulus2}, it may not intersect $A_c$ in more than $k-2$ other arcs, and it follows that $\iota(\alpha_c,\alpha_d)  \le k-2$, as the two pictured intersections can be homotoped away.
In either of Figures \ref{annulus1} or \ref{annulus3}, there is a realization of $\alpha_d$ pictured that has at most one fewer intersection point with $\alpha_c$ than $c$ has with $d$, so that $\iota(\alpha_c,\alpha_d) \le \iota(c,d) - 1 \le k-1$.

The argument is the same when $c$ is of type (1) but $d$ is of type (2), with the minor complication that there is one new possibility in Figure \ref{annulus2}. 
Note that it is still obvious that the pictured representative for $\alpha_d$ satisfies $\iota(\alpha_c,\alpha_d) \le k$.

Finally, suppose both $c$ and $d$ are of type (2).
The component of $A_c \cap A_d$ containing $p$ is either as in Figures \ref{bigon1} or \ref{bigon2}, or (after homotoping $A_d$ if necessary, leaving it in minimal position with $A_c$) it is as in Figure \ref{bigon3}.
The worst of these cases is Figure \ref{bigon2}, where we see that $\iota(\alpha_c , \alpha_d) \le 2k$. \qedhere

\begin{figure}
	\centering
	\begin{minipage}{.45\textwidth}
	\centering
	\begin{lpic}{annulus0(4cm)}
		\lbl[]{-15,60;$\alpha_c$}
	\end{lpic}
	\subcaption{$c$ is of type (1)}
	\label{annulus0}
	\end{minipage}\hfill
	\begin{minipage}{.45\textwidth}
	\centering
	\begin{lpic}{bigon0(5cm)}
		\lbl[]{65,60;$\alpha_c$}
	\end{lpic}
	\subcaption{$c$ is of type (2)}
	\label{bigon0}
	\end{minipage} \hfill
	\caption{The region $A_c$ cobounded by $c$ and $g_c$, and the bold arc $\alpha_c$.}
\end{figure}

\begin{figure}
	\centering
	\begin{minipage}{.32\textwidth}
	\centering
	\includegraphics[width=4cm]{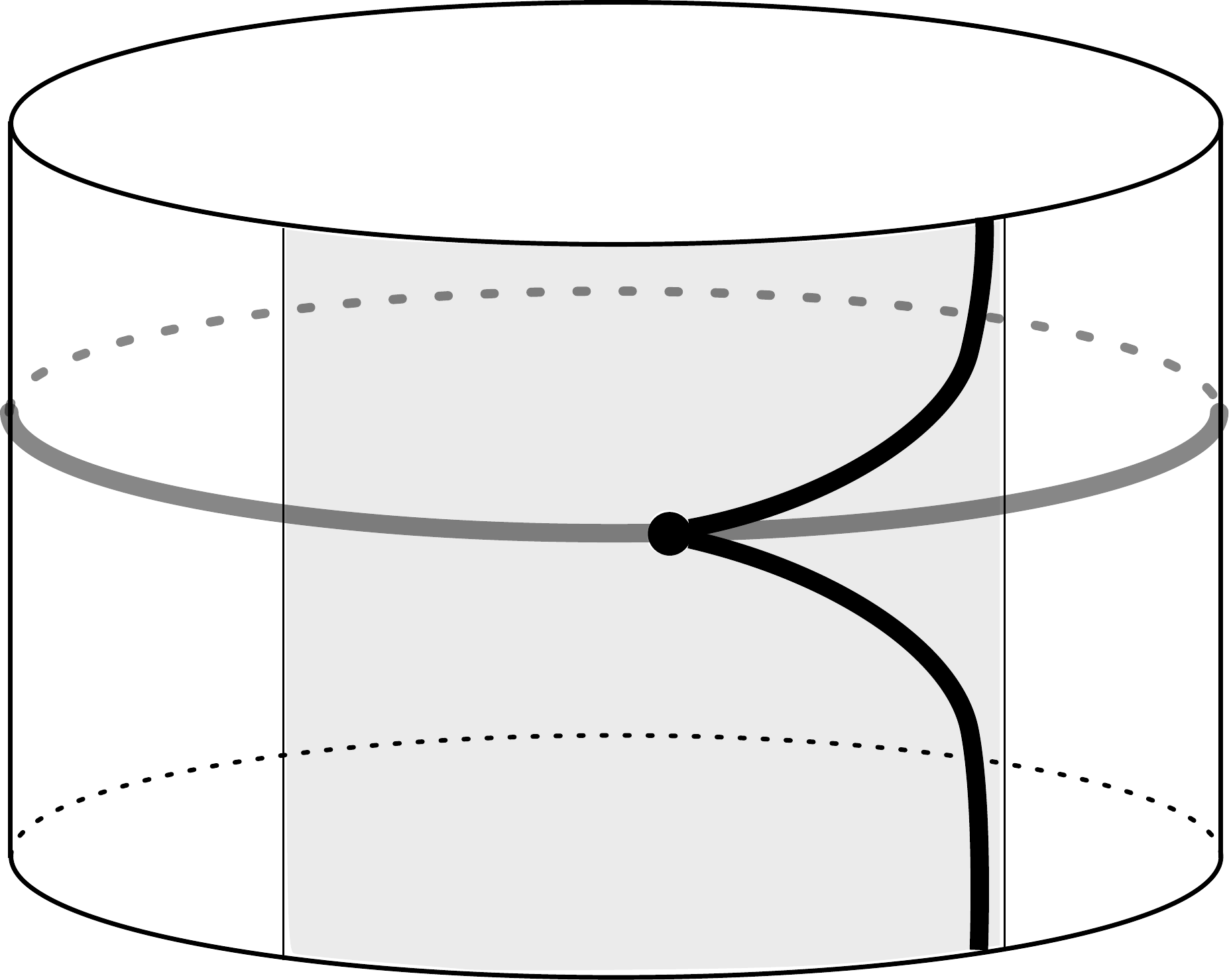}
	\subcaption{}
	\label{annulus1}
	\end{minipage} \hfill
	\begin{minipage}{.32\textwidth}
	\centering
	\includegraphics[width=4cm]{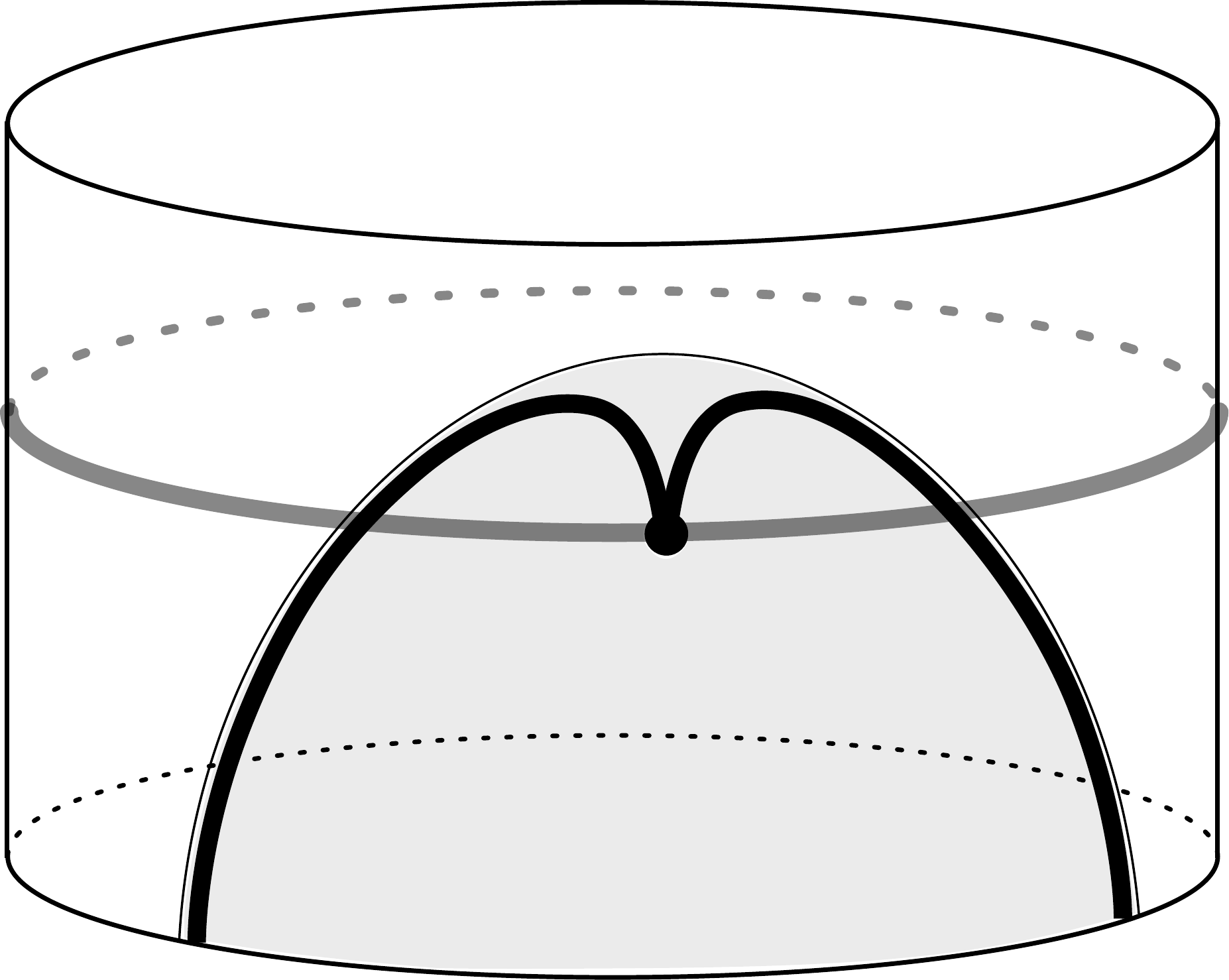}
	\subcaption{}
	\label{annulus2}
	\end{minipage}\hfill
	\begin{minipage}{.32\textwidth}
	\centering
	\includegraphics[width=4cm]{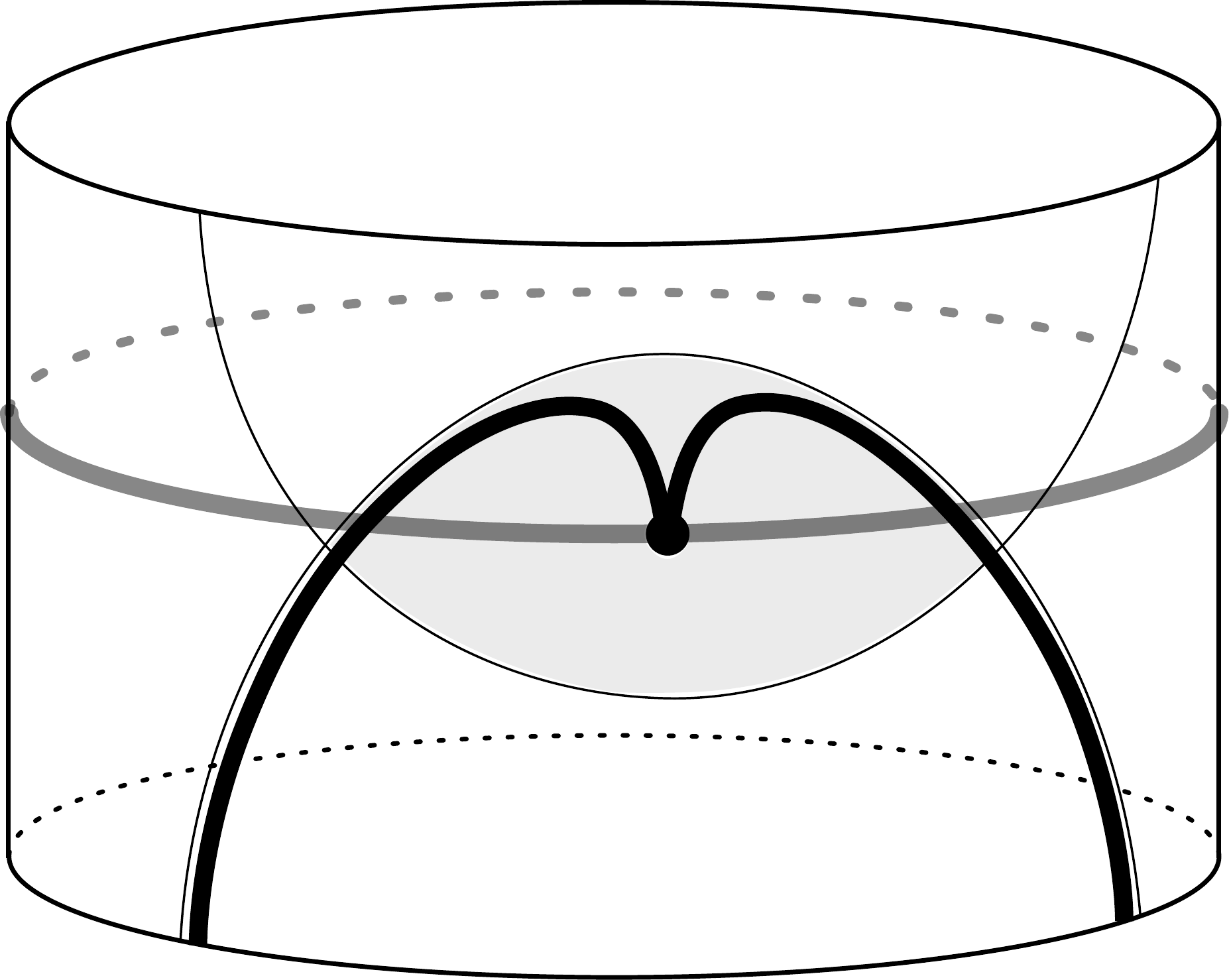}
	\subcaption{}
	\label{annulus3}
	\end{minipage}\hfill
	\caption{Possibilities for $A_c \cap A_d$ when $c$ is of type (1), with $\alpha_c$ shaded and $\alpha_d$ in bold.}
\end{figure}

\begin{figure}
	\centering
	\begin{minipage}{.32\textwidth}
	\centering
	\includegraphics[width=5cm]{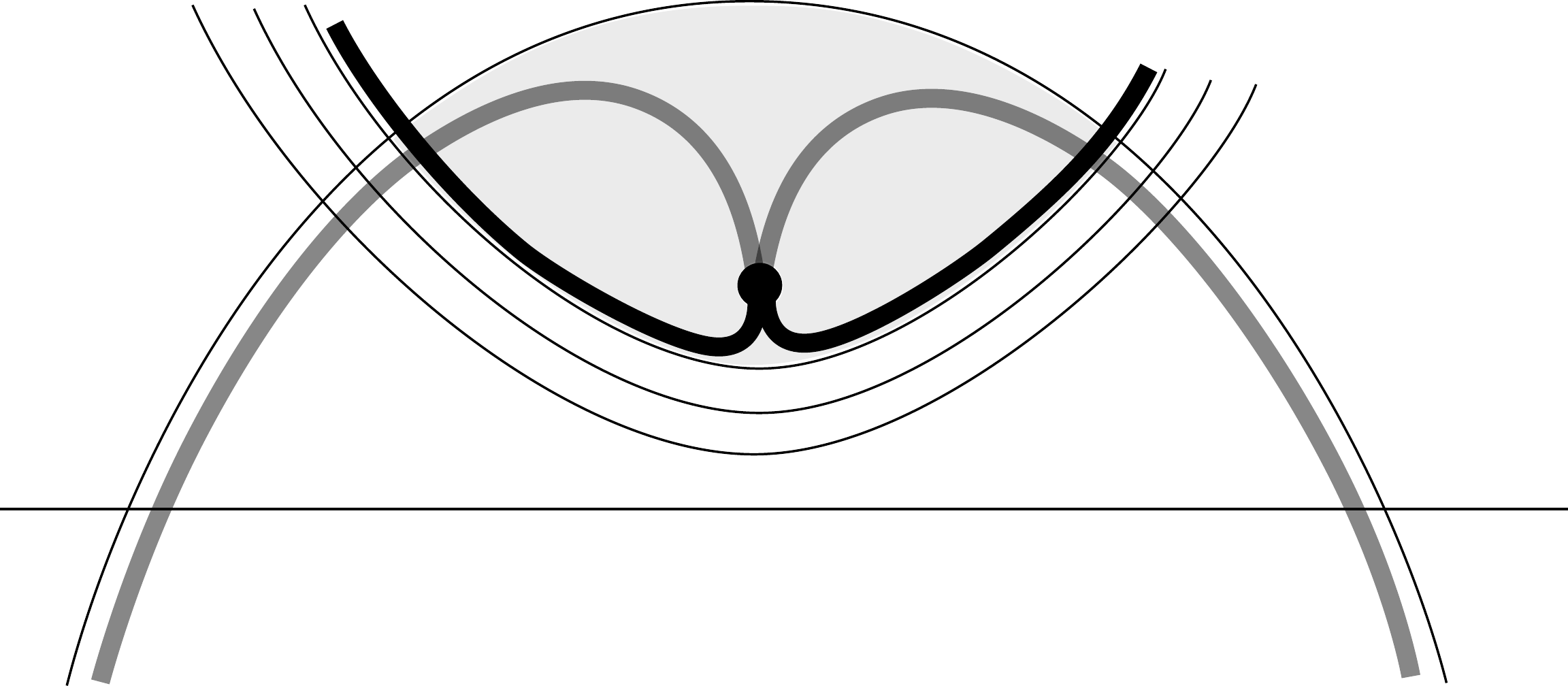}
	\subcaption{}
	\label{bigon1}
	\end{minipage} \hfill
	\begin{minipage}{.32\textwidth}
	\centering
	\includegraphics[width=5cm]{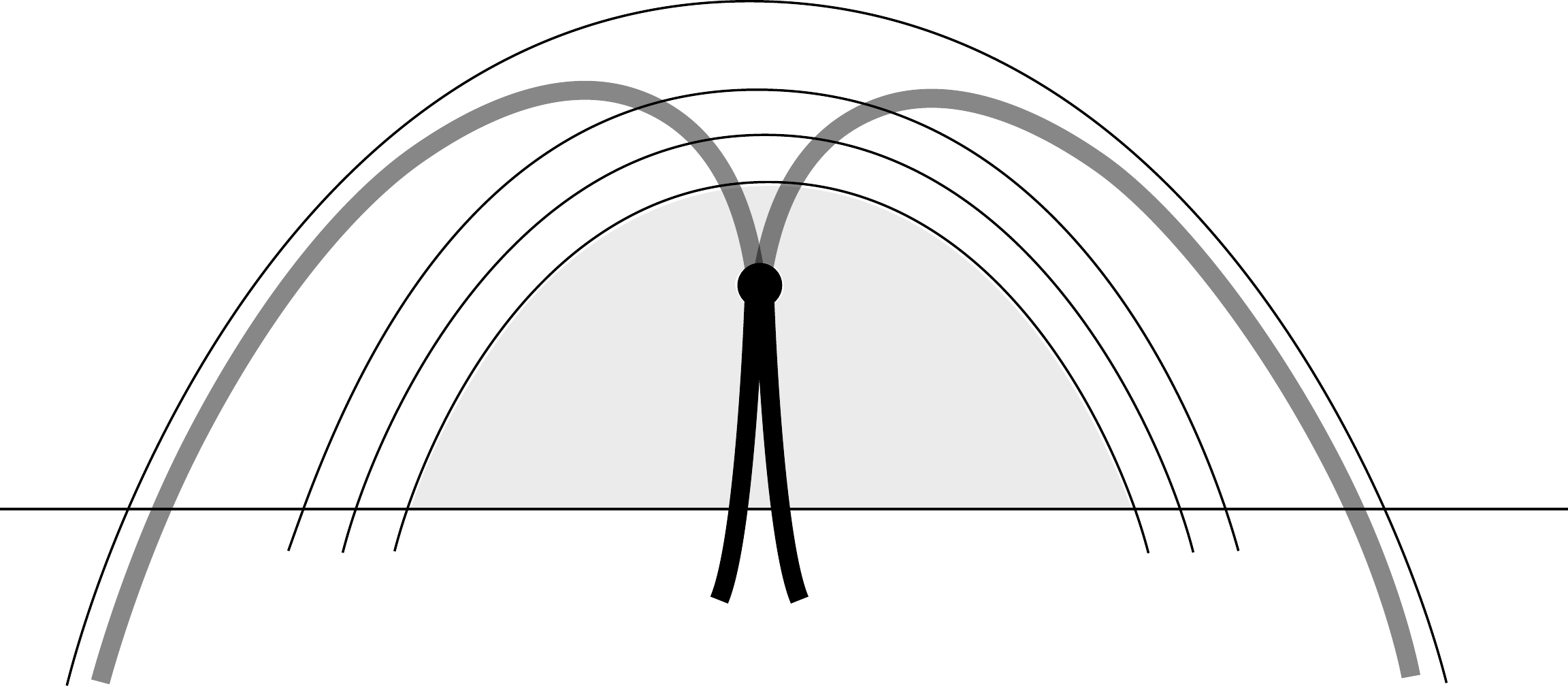}
	\subcaption{}
	\label{bigon2}
	\end{minipage}\hfill
	\begin{minipage}{.32\textwidth}
	\centering
	\includegraphics[width=5cm]{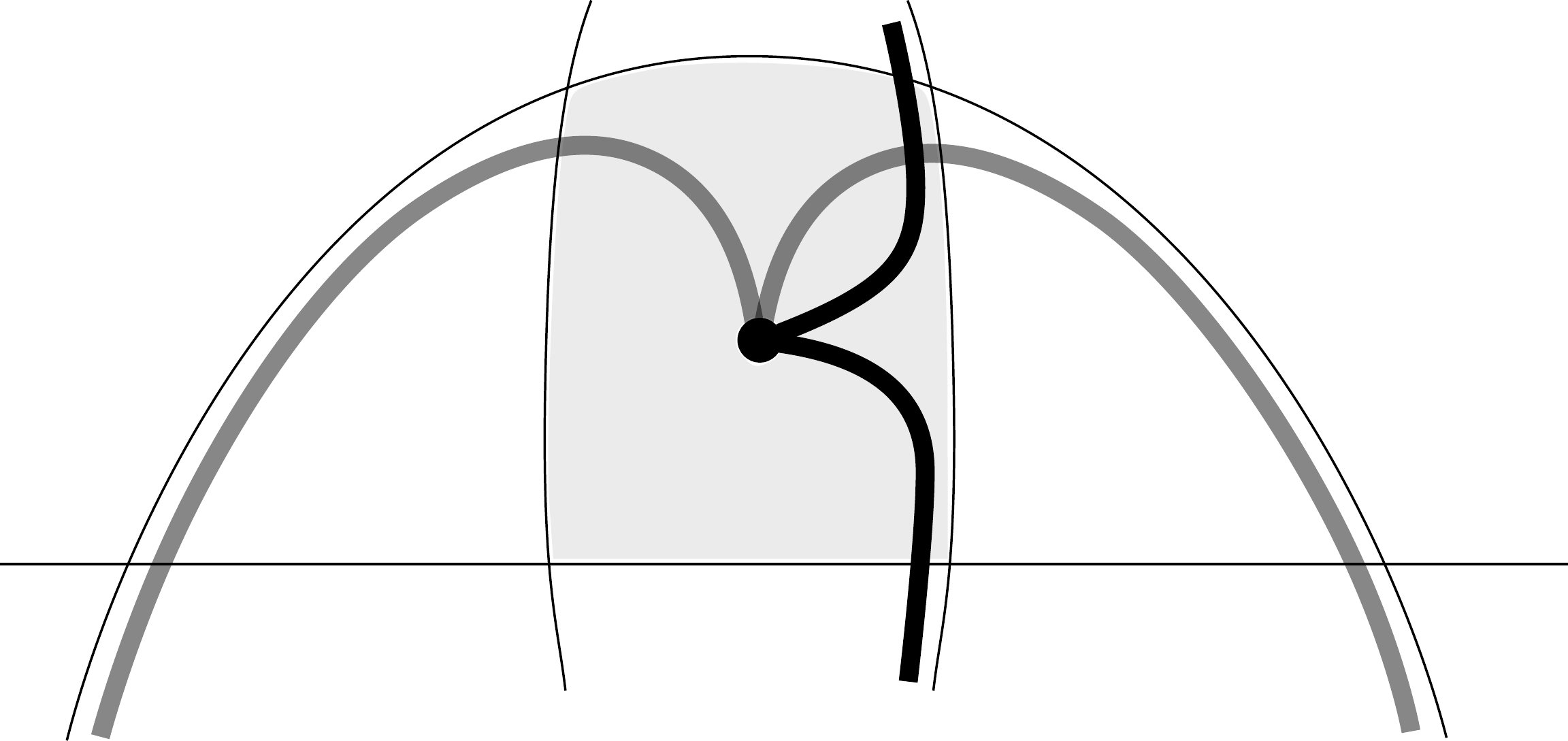}
	\subcaption{}
	\label{bigon3}
	\end{minipage}\hfill
	\caption{Possibilities for $A_c \cap A_d$ when $c$ is of type (2), with $\alpha_c$ shaded and $\alpha_d$ in bold.}
\end{figure}

\end{proof}

When $k= 2$, note the useful fact that whenever two curves $\gamma,\eta \in \Gamma$ have homotopic projections in $S_{g,n-1}$, we must have that $\gamma,\eta$  are disjoint:  
If not, there are subarcs $\gamma_p,\eta_p$ of $\gamma,\eta$ that bound a once-punctured bigon in $S_{n}$, punctured by $p$. 
The assumption that $\gamma$ and $\eta$ are homotopic after filling $p$ implies that  $\gamma \setminus \gamma_p$ and $\eta \setminus \eta_p$ must jointly bound another bigon on $S_{n-1}$ in the complement of the point $p$, and this contradicts the assumption that $\gamma$ and $\eta$ are in minimal position.
Thus, when $k= 2$, all bad curves $c\in\mathcal{B}$ are of type (1), and the proof of Lemma \ref{bad arc system} demonstrates the stronger claim:

\begin{lemma}
When $k=2$ the set $\{\alpha_c : c \in \mathcal{B} \}$ is a $1$-system of arcs.
\end{lemma}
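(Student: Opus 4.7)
The plan is to reuse the machinery already developed for Lemma \ref{bad arc system}, but now restricted to the much simpler case forced by the hypothesis $k=2$. The first step, which is already carried out in the paragraph preceding the statement, is to observe that when $k=2$ every bad curve $c \in \mathcal{B}$ must be of type (1): if $c$ and its good counterpart $g_c$ intersect, then a once-punctured bigon bounded by subarcs of $c$ and $g_c$ together with the bigon arising from homotopy of their projections would yield two bigons between $c$ and $g_c$ in $S_{g,n}$, violating minimal position whenever $k=2$ since there is no "room" for these bigons to coexist with the intersection bound. So all pairs to be analyzed are type (1) vs.\ type (1).

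Given this, the second step is to revisit the three configurations from the proof of Lemma \ref{bad arc system} that handle the type (1) vs.\ type (1) case, namely Figures \ref{annulus1}, \ref{annulus2}, \ref{annulus3}, and simply track what happens when we set $k=2$. In the configuration of Figure \ref{annulus2} the prior bound $\iota(\alpha_c,\alpha_d) \le k-2$ collapses to $\iota(\alpha_c,\alpha_d) \le 0$, i.e.\ the arcs can be made disjoint. In the configurations of Figures \ref{annulus1} and \ref{annulus3}, the prior bound $\iota(\alpha_c,\alpha_d) \le \iota(c,d) - 1 \le k-1$ becomes $\iota(\alpha_c,\alpha_d) \le 1$. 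In every case the pictured representative of $\alpha_d$ therefore meets $\alpha_c$ at most once.

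Putting these together yields $\iota(\alpha_c, \alpha_d) \le 1$ for every pair $c, d \in \mathcal{B}$, which is exactly the assertion that $\{\alpha_c : c \in \mathcal{B}\}$ is a $1$-system of arcs. The non-homotopy of distinct $\alpha_c, \alpha_d$ is inherited from the proof of Lemma \ref{bad arc system} and requires nothing new.

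I do not expect a genuine obstacle here; the lemma is essentially a corollary of Lemma \ref{bad arc system} once the type (2) possibility has been eliminated. The only mild subtlety is verifying cleanly that minimal position of $\Gamma$ in $S_{g,n}$ genuinely rules out a type (2) bad pair when $k=2$, which is the content of the preceding paragraph and should be written carefully to make sure the two bigons produced are truly distinct and together force two excess intersections between $c$ and $g_c$.
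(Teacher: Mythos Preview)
Your proposal is correct and follows essentially the same approach as the paper: rule out type (2) bad curves when $k=2$ via the minimal-position argument in the preceding paragraph, then specialize the type (1) vs.\ type (1) bounds from Lemma~\ref{bad arc system} to $k=2$ to get $\iota(\alpha_c,\alpha_d)\le 1$. One small imprecision to clean up when you write it out: the contradiction is not that ``two bigons'' exceed the intersection bound, but simply that the second region (the one not containing $p$) is an honest unpunctured bigon in $S_{g,n}$, which by itself violates minimal position---the role of $k=2$ is only to force $\iota(c,g_c)=2$ so that the complementary arcs genuinely cobound a disk in $S_{g,n-1}$.
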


The bound
$\displaystyle \mathcal{N}_2(S_{g,n}) \leq \mathcal{N}_2(S_{g,0}) + C(g+n)^{3}$
now follows using the same arguments as above.

\begin{remark}
\label{m-r-t method}
This method is close to that of \cite[Thm.~1.2]{m-r-t}, where they adopt the perspective that for $k=1$ the size of $|\mathcal{B}|$ can be bounded by observing that $\{A_c : c\in \mathcal{B}\}$ is a collection of annuli which pairwise intersect essentially.
\end{remark}

\subsection*{Acknowledgements}
The authors thank Piotr Przytycki, Justin Malestein, Igor Rivin, and Louis Theran for motivation and inspiration.


\begin{thebibliography}{1}

\bibitem{aougab2015local}
Tarik Aougab.
\newblock Local geometry of the k-curve graph.
\newblock {\em arXiv preprint arXiv:1508.00502}, 2015.

\bibitem{aougab-souto}
Tarik Aougab and Juan Souto.
\newblock Counting curve types.
\newblock {\em arXiv preprint arXiv:1606.06067}, 2016.

\bibitem{Busergeometry}
Peter Buser.
\newblock {\em Geometry and spectra of compact {R}iemann surfaces}, volume 106
  of {\em Progress in Mathematics}.
\newblock Birkh\"auser Boston, Inc., Boston, MA, 1992.

\bibitem{farb-margalit}
Benson Farb and Dan Margalit.
\newblock {\em A Primer on Mapping Class Groups (PMS-49)}.
\newblock Princeton University Press, 2011.

\bibitem{gilman}
Jane Gilman.
\newblock A geometric approach to {J}{\o}rgensen's inequality.
\newblock {\em Advances in Mathematics}, 85(2):193--197, 1991.

\bibitem{Juvansystems}
M.~Juvan, A.~Malni{\v{c}}, and B.~Mohar.
\newblock Systems of curves on surfaces.
\newblock {\em J. Combin. Theory Ser. B}, 68(1):7--22, 1996.

\bibitem{m-r-t}
Justin Malestein, Igor Rivin, and Louis Theran.
\newblock Topological designs.
\newblock {\em Geometriae Dedicata}, 168(1):221--233, 2014.

\bibitem{przytycki}
Piotr Przytycki.
\newblock Arcs intersecting at most once.
\newblock {\em Geometric and Functional Analysis}, 25(2):658--670, 2015.

\end{thebibliography}
\end{document}